\numberwithin{equation}{section}
\newtheorem{theoreme}{Theorem}[section]
\newtheorem{proposition}[theoreme]{Proposition}
\newtheorem{lemme}[theoreme]{Lemma}
\newtheorem{definition}[theoreme]{Definition}
\newcommand{\myendproof}{\hfill\strut\nobreak\hfill\tombstone\par\medbreak} 
\newcommand{\tombstone}{\hbox{\lower.4pt\vbox{\hrule\hbox{\vrule
  \kern7.6pt\vrule height7.6pt}\hrule}\kern.5pt}} 
\newenvironment{proof}[1][Proof]{\noindent \textbf{#1.}~ }{\myendproof}
\newcommand{\R}{\ensuremath{\mathbb R}}
\newcommand{\val}{\mathsf{val}}
\newcommand{\E}{\ensuremath{\mathbb E}}
\providecommand{\prob}{\mathsf{P}}
\newcommand{\N}{\ensuremath{\mathbb N}}
\newcommand{\xx}{\ensuremath{\mathbf x}}
\newcommand{\yy}{\ensuremath{\mathbf y}}
\newcommand{\Id}{\ensuremath{\operatorname{Id}}}
\newcommand{\De}{\Delta}
\newcommand{\ii}{\ensuremath{\mathbf i}}
\newcommand{\jj}{\ensuremath{\mathbf j}}
\newcommand{\la}{\lambda}
\newcommand{\ep}{\varepsilon}
\newcommand{\al}{\alpha}
\newcommand{\be}{\beta}
\newcommand{\si}{\sigma}
\newcommand{\bit}{\mathsf{bit}}
\newcommand{\ga}{\gamma}
\definecolor{darkblue}{rgb}{0,0,0.7} % darkblue color
\title{A formula for the value of a stochastic game}
\author{Luc Attia\footnote{\'Ecole Polytechnique, Palaiseau, France.}$\ $ and Miquel Oliu-Barton\footnote{Universit\'e Paris-Dauphine, PSL, France. Email: miquel.oliu.barton@normalesup.org}}
\date{\today\\[0.4cm]\footnotesize{\emph{Proceedings of the National Academy of Science of the USA (2019), https://doi.org/10.1073/pnas.1908643116}}\\[0.3cm]
Edited by Robert J. Aumann, The Hebrew University, Jerusalem, Israel, and approved November 12, 2019}
\begin{document}
 \maketitle
\abstract{In 1953, Lloyd Shapley defined the model of stochastic games, which were the first general dynamic model of a game to be defined, and proved that competitive stochastic games have a discounted value. In 1982, Jean-Fran\c{c}ois Mertens and Abraham Neyman 
proved that competitive stochastic games admit a robust solution concept, the value, which is equal to the limit of the discounted values as the discount rate goes to 0. 
Both contributions were published in PNAS. 
In the present paper, we provide a tractable formula for the value of competitive stochastic games.} 

\noindent \paragraph{Significance Statement.} 
Stochastic games were introduced by the Nobel Memorial Prize winner Lloyd Shapley in 1953 in order to model dynamic interactions in which the environment changes in response to the players' behavior. The theory of stochastic games and its applications have been studied in several scientific disciplines, including economics, operations research, evolutionary biology, and computer science. In addition, mathematical tools that were used and developed in the study of stochastic games are used by mathematicians and computer scientists in other fields. This paper contributes to the theory of stochastic games by providing a tractable formula for the value of finite competitive stochastic games. This result settles a major open problem which remained unsolved for nearly 40 years. %%%  REPLACED  %%%%
%A strategic interaction between two opponents is called a game; economic agents competing for a resource, for instance.  Predicting their outcome is a central concern of game theory. Games that admit a unique reasonable prediction are said to have a value, and the value represents the advantage of one player over the other, in quantitative terms. Shapley (PNAS 1953) introduced competitive stochastic games in order to model dynamic interactions in which the environment changes in response to the players' behavior. Mathematically, they are described by two sets and two functions, which represent what options are available to the players during the game, and how the players' choices affect them immediately and in the future. Mertens and Neyman (PNAS 1982) proved that competitive stochastic games have a value nearly 40 years ago. Since then, finding a tractable formula for the value of competitive stochastic games has been a major open problem. The present paper settles this question. 
%%%  REPLACED  %%%%

\section{Introduction}
\subsection{Motivation}
Stochastic games are the first general model of dynamic
games. Introduced by Shapley \cite{shapley53} in 1953, stochastic
games extend the model of strategic-form games, which is due
to von Neumann \cite{VN28}, to dynamic situations in which
the environment (henceforth, the {state}) changes in response
to the players' choices. They also extend the model of
Markov decision problems to competitive situations with more
than one decision-maker. 

Stochastic games proceed in stages. At each stage, the
players choose actions which are available to them at the
current state.
Their choices have two effects: 
they generate a stage {reward} for each player, and
they determine the probability for the state at
the next stage. Consequently, the players are typically confronted
with a trade-off between getting high rewards in the present
and trying to reach states that will ensure high future rewards.
Stochastic games and their applications have
been studied in several scientific disciplines, including
economics, operations research, evolutionary biology, and
computer science. In addition, mathematical tools that were
used and developed in the study of stochastic games are used
by mathematicians and computer scientists in other fields.
We refer the readers to Solan and Vieille \cite{SV15} for a
summary of the historical context and the impact of
Shapley's seminal contribution.

The present paper
deals with finite competitive stochastic games, that is:
two-player stochastic games with finitely many states and
actions, and where the stage rewards of the players add up to zero. 
Shapley \cite{shapley53} proved that these games have a discounted value, which
represents what playing the game is worth to the players when future rewards are discounted at a constant positive rate. 
Bewley and Kohlberg \cite{BK76} proved that the discounted values admit a limit as the discount rate goes to 0.
Building on this result, Mertens and Neyman \cite{MN81, MN82} proved that finite competitive stochastic games admit a robust solution concept, the value, which represents what playing the game is worth to the players when they are sufficiently patient. % players. 

Finding a tractable formula for the value of finite competitive stochastic games was major open problem for nearly 40 years, which is settled in the present contribution.
While opening a new path for faster computations, our approach may also bring new quantitative and qualitative insights on the model of stochastic games.

\subsection{Outline of the paper}
The paper is organized as follows. Section~\ref{prelim} states
our results on finite competitive stochastic games, namely a
formula for the $\la$-discounted values (proved in
Section~\ref{v_la}), and a formula for the value (proved in
Section~\ref{the_limit}).
Section~\ref{algos} describes the algorithmic implications and
tractability of these two formulas.
Section~\ref{comments} concludes with remarks and extensions.

\section{Context and main results} \label{prelim}
In order to state our results precisely, we recall some
definitions and well-known results about two-player zero-sum
games (Section~\ref{NF}) and about finite competitive stochastic
games (Section~\ref{NF2}).
In Section~\ref{stateart} we give a brief overview of the relevant
literature on finite competitive stochastic games.
Our results are described in Section~\ref{main}.

\paragraph{Notation.} Throughout the paper, $\N$ denotes the set of positive integers.
For any finite set $E$ we denote the set of probabilities over $E$
by $\De(E)=\{f:E\to [0,1]\mid \sum_{e\in E}f(e)=1\}$ and its
cardinality by $|E|$.

\subsection{Preliminaries on zero-sum games}\label{NF}
The aim of this section is to recall some well-known
definitions and facts about two-player zero-sum games,
henceforth zero-sum games.

\paragraph{Definition.}
A \textit{zero-sum game} is described by a triplet
$(S,T,\rho)$ where $S$ and $T$ are the sets of possible
strategies for Player 1 and 2, respectively, and
$\rho:S\times T\to \R$ is a pay-off function. It is played
as follows. Independently and simultaneously, the first
player chooses $s\in S$ and the second player chooses $t\in T$.
Player 1 receives $\rho(s,t)$ and Player 2 receives $-\rho(s,t)$. 
The zero-sum game $(S,T,\rho)$ has a \emph{value} whenever
% BvS $$ ... $$ is old TeX and may sometimes not display
% well (although often ok). \[ ... \] is better.
% $$\sup_{s\in S}\inf_{t\in T} \rho(s,t)= \inf_{t\in T}\sup_{s\in S}\rho(s,t).$$ 
\[
\sup_{s\in S}\inf_{t\in T} \rho(s,t)= \inf_{t\in
T}\sup_{s\in S}\rho(s,t).
\]
In this case, we denote this common quantity by $\val\, \rho$.  
%The notation $\val_{S\times T} \, \rho$ will be also be used in order to recall or emphasise the sets of actions. 

\paragraph{Optimal strategies.} Let $(S,T,\rho)$ be a
zero-sum game which has a value. An \textit{optimal
strategy} for Player 1 is an element $s^*\in S$ so that
%$$\rho(s^*,t)\geq \val\, \rho,\quad \text{for all }t\in T.$$
$\rho(s^*,t)\geq \val\,\rho$ for all $t\in T$.
Similarly, $t^*\in T$ is an optimal strategy for Player~2 if
$\rho(s,t^*)\leq\val\, \rho$ for all $s\in S$.

\paragraph{The value operator.} The following properties are well-known: 
\begin{itemize}\item[$(i)$] \emph{Minmax theorem}. Let $(S,T,\rho)$ be a zero-sum game. Suppose that $S$ and $T$ are two compact subsets of some topological vector space, $\rho$ is a continuous function, the map $s\mapsto \rho(s,t)$ is concave for all $t\in T$, and the map $t\mapsto \rho(s,t)$ is convex for all $s\in S$. Then $(S,T,\rho)$ has a value and both players have optimal strategies. 
\item[$(ii)$] \emph{Monotonicity}. Suppose that $(S,T,\rho)$ and $(S,T,\nu)$ have a value, and 
$\rho(s,t)\leq \nu(s,t)$ holds for all $(s,t)\in S\times T$. Then 
$\val\, \rho\leq \val\, \nu$.
%\item[$(iii)$] \emph{Affine invariance}. Suppose that $(S,T,\rho)$ has a value. For any $c>0$ and $d\in \R$, the game with payoff function defined as $\rho'(s,t):=c \rho(s,t)+d$, for all $(s,t)\in S\times T$, has a value. Moreover,  %$\val\, \rho' =c\, \val\,\rho+d$. 
\end{itemize}
\paragraph{Matrix games.} In the sequel, we identify every real matrix $M=(m_{a,b})$ of size $p\times q$ with the zero-sum game $(S_M,T_M,\rho_M)$ where $S_M=\De(\{1,\dots,p\})$, $T_M=\De(\{1,\dots,q\})$ and where 
\[
\rho_M(s,t)=\sum_{a=1}^p  \sum_{b=1}^q s(a)\,m_{a,b}\,t(b)\qquad
%% BvS: this represents better the matrix product s\T M t
\forall (s,t)\in S_M\times T_M.
\]
%=\E_{x\otimes y}[m_{i,j}]$.  
The value of the matrix $M$, denoted by $\val\, M$, is the value of $(S_M,T_M,\rho_M)$ which exists by the minmax theorem. The following properties are well-known:
\begin{itemize}
\item[$(iii)$] \emph{Continuity.} Suppose that $M(t)$ is a
matrix with entries that depend continuously on some
parameter $t\in \R$. Then the map
$t\mapsto \val\, M(t)$ is continuous. 
\item[$(iv)$] \emph{A formula for the value.} For any matrix
$M$, there exists a square sub-matrix $\hat{M}$ of $M$ so that
$\val\,M=\frac{\det \hat{M}}{\varphi(\hat{M})}$, where $\varphi(\hat{M})$ denotes the sum of all the co-factors of $\hat{M}$, with the convention that $\varphi(\hat{M})=1$ if $\hat{M}$ is of size $1\times 1$.
\end{itemize}

\paragraph{Comments.} Property $(i)$ is taken from Sion
\cite{sion58}, a generalization of von Neumann's \cite{VN28}
minmax theorem, while Property $(iv)$ was established by
Shapley and Snow \cite{SS50}. The other two properties are
straightforward. 

\subsection{Stochastic games}\label{NF2}
We present now the standard model of \textbf{finite} competitive stochastic games, henceforth stochastic games for simplicity.  We refer the reader to Sorin's book \cite[Chapter 5]{sorin02} and to Renault's notes \cite{renaultnotes2} for a more detailed presentation of stochastic games.

\paragraph{Definition.} 
A \emph{stochastic game} is described by a tuple $(K,I,J,g,q,k)$, where
$K=\{1,\dots,n\}$ is a finite set of states, for some $n\in \N$, $I$ and $J$ are the finite action sets of Player 1 and 2, respectively, $g:K\times I\times J\to \R$ is a reward function to Player~1, $q:K\times I\times J\to \De(K)$ is a transition function, and 
$1\leq k\leq n$ is an initial state.

The game proceeds in stages as follows. At each stage $m\geq 1$, both players are informed of the current state $k_m\in K$, where $k_1=k$.  Then, independently and simultaneously,  Player 1 chooses an action $i_m\in I$ and Player 2 chooses an action $j_m\in J$. The pair $(i_m,j_m)$ is then observed by both players, from which they can infer the stage reward 
$g(k_m,i_m,j_m)$. A new state $k_{m+1}$ is then chosen
according to the probability distribution $q(k_m,i_m,j_m)$, and the game proceeds to stage $m+1$. 

\paragraph{Discounted stochastic games.} For any discount rate $\la \in(0,1]$, we denote by 
$(K,I,J,g,q,k,\la)$ the stochastic game $(K,I,J,g,q,k)$ where Player 1 maximizes, in expectation, the normalized $\la$-discounted sum of rewards 
\[
\sum\nolimits_{m\geq 1}\la(1-\la)^{m-1} g(k_m,i_m,j_m),
\]
while Player 2 minimizes this amount.\\

In the following, the discount rate $\la$ and the initial
state $k$ will be considered as parameters, while
$(K,I,J,g,q)$ is fixed. 

\paragraph{Strategies.}
A behavioral {strategy}, henceforth a \emph{strategy}, is a
decision rule from the set of possible observations of a
player to the set of probabilities over the set of his actions.
Formally, a strategy for Player 1 is a sequence of mappings
$\si=(\si_m)_{m\geq 1}$, where $\si_m: (K\times I\times
J)^{m-1}\times K\to \De(I)$. 
Similarly, a strategy for Player 2 is a sequence of mappings $\tau=(\tau_m)_{m\geq 1}$, where $\tau_m: (K\times I\times J)^{m-1}\times K\to \De(J)$. 
The sets of strategies are denoted, respectively, by $\Sigma$ and~$\mathcal{T}$. 

\paragraph{The expected pay-off.} By the Kolmogorov extension theorem, together with an initial
state $k$ and the transition function $q$, any pair of strategies $(\si,\tau)\in
\Sigma\times \mathcal{T}$ induces a unique probability 
$\prob_{\si,\tau}^k$
over the sets of plays $(K\times I\times J)^{\N}$ on the
% sigma-algebra generated by the cylinders (which form the 
% BvS I would not use \sigma because that is here a strategy
% defining cylinder more precisely or giving a reference may
% be useful
sigma-algebra generated by the cylinders. 
Hence, to any pair of strategies
$(\si,\tau)\in \Sigma\times \mathcal{T}$ corresponds a
unique pay-off $\ga_\la^k(\si,\tau)$ in the discounted game $(K,I,J, g,q,k,\la)$,
\[
\ga_\la^k(\si,\tau):=
\E_{\si,\tau}^k \left[\sum\nolimits_{m\geq 1}\la(1-\la)^{m-1}
g(k_m,i_m,j_m)\right]
\]
where $\E_{\si,\tau}^k$ denotes the expectation with respect
to the probability $\prob_{\si,\tau}^k$.

\paragraph{Stationary strategies.} A \emph{stationary strategy} is a strategy that depends only on the current state. 
Thus, $x:K\to \De(I)$ is a stationary strategy for Player $1$ while $y:K\to \De(J)$ is a stationary strategy for Player $2$. The sets of stationary strategies are $\De(I)^n$ and $\De(J)^n$, respectively. 
A \emph{pure stationary strategy} is a stationary strategy that is deterministic. The sets of pure stationary strategies are $I^n$ and $J^n$, respectively, and we will refer to pure stationary strategies with the bold signs $\ii\in I^n$ and $\jj\in J^n$.
\paragraph{A useful expression.} 

Suppose that both players use stationary strategies $x$ and $y$ in the discounted stochastic game $(K,I,J,g,q,k,\la)$ for some $\la\in(0,1]$. The evolution of the state then follows a Markov chain, and the stage rewards depend only on the current state. Let $Q(x,y)\in \R^{n\times n}$ and $g(x,y)\in \R^{n}$ denote, respectively, the corresponding transition matrix and the vector of expected rewards. Formally, for all $1\leq \ell,\ell'\leq n$, 
\begin{eqnarray}\label{Qg1} 
Q^{\ell,\ell'}(x,y)&=&\sum_{(i,j)\in I\times J} x^\ell(i)y^\ell(j)q(\ell'\,|\, \ell,i,j)\\ \label{Qg2} 
g^\ell(x,y)&=& \sum_{(i,j)\in I\times J} x^\ell(i)y^\ell(j) g(\ell,i,j)\,.
\end{eqnarray}
Let 
$\ga_\la(x,y)=(\ga_\la^1(x,y),\dots,\ga_\la^n(x,y))\in \R^n$.  
Then $Q(x,y)$, $g(x,y)$ and $\ga_\la(x,y)$ satisfy the relations
\begin{eqnarray*}
\ga_\la(x,y)
&=&\sum\nolimits_{m\geq 1}\la (1-\la)^{m-1} Q^{m-1}(x,y) g(x,y)
\\
&=&\la g(x,y)+(1-\la)Q(x,y)\ga_\la(x,y).
\end{eqnarray*}
Let $\Id$ denote the identity matrix of size $n$.  The matrix $\Id-(1-\la)Q(x,y)$ is invertible, as $Q(x,y)$ is a stochastic matrix and $\la\in(0,1]$. Consequently, %\la,
$\ga_\la(x,y)=\la (\Id-{(1-\la)}Q(x,y))^{-1} g(x,y)$. Thus, by Cramer's rule, 
\begin{equation}\label{qut2} \ga^k_\la(x,y)=\frac{d^k_\la(x,y)} {d^0_\la(x,y)}, \end{equation}
where  $d^0_\la(x,y)=\det(\Id - (1-\la)Q(x,y))$ and where $d^k_\la(x,y)$ is the determinant of the $n\times n$-matrix obtained by replacing the $k$-th column of $\Id - (1-\la)Q(x,y)$ with $\la g(x,y)$.

\paragraph{The discounted values.} 
The discounted stochastic game $(K,I,J,g,q,k,\la)$ and the zero-sum game 
$(\Sigma, \mathcal{T},\ga_\la^k)$ are equal by construction.  Thus, the discounted stochastic game has a value whenever
$$\sup_{\si \in \Sigma} \inf_{\tau\in \mathcal T} \ga_\la^k(\si,\tau)=\inf_{\tau \in \mathcal T} \sup_{\si\in 
 \Sigma} \ga_\la^k(\si,\tau).$$
In this case, the value is denoted by $v_\la^k$, and is often referred to as the $\la$-discounted value of the stochastic game $(K,I,J,g,q,k)$. 
The following result is due to Shapley \cite{shapley53}:
 \begin{itemize}
 \item[$(v)$] Every discounted stochastic game $(K,I,J,g,q,k,\la)$ has a value, 
  and both players have optimal stationary strategies. For each $1\leq \ell\leq n$ and $u\in \R^n$, consider the following matrix of size $|I|\times |J|$: 
\begin{equation*}\label{sgame}\mathcal{G}^\ell_{\la,u}:=\left(\la
g(\ell,i,j)+(1-\la)\sum\nolimits_{\ell'=1}^n q(\ell'\mid\ell,i,j)u^{\ell'}\right)_{i,j}\,.\end{equation*}
The vector of values $v_\la=(v^1_\la,\dots,v^n_\la)$ %\in \R^n$ 
is then the unique fixed point of the Shapley operator $\Phi(\la,\,\cdot\,):\R^n\to \R^n$, which is defined by
$\Phi^\ell(\la,u):= \val \, \mathcal{G}^\ell_{\la,u}$, for all $1\leq \ell\leq n$ and $u\in\R^n$. 
\end{itemize}

\paragraph{Remark.}
In the model of stochastic games, %In a discounted stochastic game, 
the discount rate stands for the degree of impatience of the players, in the sense that future rewards are discounted. %worth less than present rewards. 
Alternatively, one can interpret $\la$ as the probability that the game stops after every stage. 
The more general case of stopping probabilities that depend on the current state and on the players' actions can be handled in a similar way, as already noted by Shapley \cite{shapley53}.

\paragraph{The value.} The stochastic game $(K, I, J, g, q, k)$ has a value if there exists $v^k\in \R$ such that for any $\ep>0$ there exists $M_0$ such that Player 1 can guarantee that for any $M_0\leq M\leq +\infty$ the expectation of the average
reward per stage in the first $M$ stages of the game is at least $v^k -\ep$, and Player 2 can guarantee that
this amount is at most $v^k-\ep$. It follows that if the game has a value $v^k$, then for each $\ep>0$
there exists a pair of strategies $(\si_\ep,\tau_\ep)\in \Sigma\times  \mathcal{T}$ such that, for some $\la_0\in (0,1]$, the following inequalities hold for all 
$\la\in(0,\la_0)$:
\begin{eqnarray*}
\ga_\la^k(\si_\ep, \tau)&\geq& v^k-\ep\qquad \forall \tau \in \mathcal{T}\\
\ga_\la^k(\si, \tau_\ep)&\leq& v^k+\ep\qquad \forall \si \in \Sigma\,.\end{eqnarray*}
%The stochastic game $(K,I,J,g,q,k)$ has a \emph{value} if there exists  
%$v^k\in \R$ such that Player 1 can guarantee that the normalized $\la$-discounted sum of rewards is at least $v^k$,  while Player 2 can guarantee that this amount is at most $v^k$, in expectation and for any sufficiently small discount rate. Formally, the game has value $v^k$
% if for each $\ep>0$ there exists a pair of strategies $(\si_\ep,\tau_\ep)\in \Sigma\times  \mathcal{T}$ such that, for some $\la_0\in (0,1]$, the following inequalities hold for all 
%$\la\in(0,\la_0)$:
%\begin{eqnarray*}
%\ga_\la^k(\si_\ep, \tau)&\geq& v^k-\ep\qquad \forall \tau \in \mathcal{T}\\
%\ga_\la^k(\si, \tau_\ep)&\leq& v^k+\ep\qquad \forall \si \in \Sigma\,.\end{eqnarray*}
%\[ \ga_\la^k(\si_\ep,\tau)\geq v^k-\ep\qquad \forall \tau\in \mathcal{T} \]
%\[ \ga_\la^k(\si,\tau_\ep)\leq v^k+\ep\qquad \forall \si\in \Sigma\. \]
%left[\frac{1}{T} \sum\nolimits_{m=1}^T g(k_m,i_m,j_m)\right]
%\left[\frac{1}{T} \sum\nolimits_{m=1}^T g(k_m,i_m,j_m)\right]
The following result is due to Mertens and Neyman \cite{MN81}:
 \begin{itemize}
 \item[$(vi)$] Every stochastic game $(K,I,J,g,q,k)$ has a value $v^k$, and $v^k=\lim_{\la\to 0} v^k_\la$.
 \end{itemize}

\subsection{State of the art} \label{stateart}
Since its introduction by Shapley \cite{shapley53}, the theory of stochastic games and its applications have been studied in several scientific disciplines.
We restrict our brief literature survey to the theory of
finite competitive stochastic games and related algorithms.

\paragraph{The discounted values.}
% BvS repeats
% Stochastic games were introduced in the early 1950s by
In 1953, Shapley \cite{shapley53} proved that every discounted
stochastic game $(K,I,J,g,q,k,\la)$ admits a value $v^k_\la$, 
% reminder of k 
%for each initial state~$k$,
and that both players have optimal stationary strategies.
Furthermore, the vector of values
$v_\la=(v^1_\la,\dots,v^n_\la)$ is the 
unique fixed point of an explicit operator. 

\paragraph{Existence of the value.} Building on Shapley's
characterization of the discounted values and on a deep
result from real algebraic geometry, the so-called
Tarski-Seidenberg elimination theorem, Bewley and Kohlberg
\cite{BK76} proved in 1976 that the discounted
values converge as the discount rate tends to zero.
Mertens and Neyman \cite{MN81,MN82} strengthened this result in
the early 1980s by establishing that every stochastic game $(K,I,J,g,q,k)$ has a value $v^k$, and that the value coincides with the limit of the
discounted values. It is worth noting that, unlike discounted stochastic games, where
the observation of the past actions is irrelevant, the
existence of the value relies on the observation of the stage 
rewards. 

\paragraph{Alternative proofs of convergence.}
In the late 1990s, 
Szczechla, Connell, Filar, and Vrieze \cite{SCFV97}
% former ... latter hard to match and Oliu-Barton \cite{OB14}
gave an alternative proof for the convergence of the
discounted values as the discount rate goes to zero, using Shapley's characterization of the discounted
values and the geometry of complex analytic varieties.
Another proof was recently obtained by Oliu-Barton \cite{OB14}, based on the
theory of finite Markov chains and on Motzkin's alternative theorem for linear systems. 

\paragraph{Robustness of the value.} %Many results  first results on the value of stochastic games go back to the mid 1960s. 
The years 2010s have brought many new results concerning the value of stochastic games. 
%The robustness aspect of results from Mertens and Neyman \cite{MN81,MN82} have been developed and 
Neyman and Sorin
\cite{NS10} studied stochastic games with a random duration
clock. That is, at each stage, the players receive an
additional signal which carries information about the number
of remaining stages. Assuming that the expected number of
remaining stages decreases throughout the game, and that the
expected number of stages converges to infinity, the values
of the stochastic games with a random duration clock
converge, and the limit is equal to the value of the
stochastic game. 
Ziliotto \cite{ziliotto16} considered weighted-average
stochastic games, that is, stochastic games where Player 1
maximizes in expectation a fixed weighted average of the sequence of
rewards, namely $\sum_{m\geq 1}\theta_m g(k_m,i_m,j_m)$.
% BvS: "in expectation" seems in the wrong place
% Assuming that
% BvS "If" with "THEN" makes clearer where assumptions end
If $\sum_{m\geq 1}|\theta_{m+1}^p-\theta_m^p|$ converges to zero for some
$p>0$, then the values of the weighted-average stochastic games
converge, and the limit is equal to the value of the
stochastic game.
Neyman \cite{neyman13} considered
discounted stochastic games in continuous time and proved
that their value coincides with the value of the discrete
model.
Finally, Oliu-Barton and Ziliotto \cite{OBZ18} proved
that stochastic games satisfy the \emph{constant pay-off}
property, as conjectured by Sorin, Venel and Vigeral
\cite{SVV10}. That is, for sufficiently small $\la$, any pair of optimal strategies of the discounted game $(K,I,J,g,q,k,\la)$ 
has the property that, in expectation, the average of the cumulated $\la$-discounted sum of rewards on any set of consecutive stages of cardinality of order $1/\la$ is approximately equal to $v^k$. % (or to $v^k_\la$). %, on any set of consecutive stages of cardinality of order $1/\la$.
% $\{m\in \N,\ s/\la\leq m\leq t/\la\}$. %|eq ,m,m+1,\dots,m+\lflloor t/\la\rflorr M$ where $M\la$ . 
%More precisely, 
%for any $\varepsilon>0$ there is some
%$\lambda_0>0$ so that for any $\la\in(0,\la_0)$, any couple of optimal strategies $(\si,\tau)$ of the discounted game $(K,I,J,g,q,\la,g)$ and any $0<s<t$,
%\[
%-\ep \leq \E_{\si,\tau}^k\left[\sum\nolimits_{s/\la \leq m\leq t/\la} \la(1-\la)^{m-1}\left(g(k_m,i_m,j_m)-v^k\right)\right]
%\leq \ep\,.\]
%\[
%\Bigg|\E_{\si,\tau}^k\left[\sum\nolimits_{s/\la \leq m\leq t/\la} \la(1-\la)^{m-1}g(k_m,i_m,j_m)\right]-
%\left(\sum\nolimits_{s/\la \leq m\leq t/\la}\la(1-\la)^{m-1}\right)v^k\Bigg|<\ep\,.\]

\paragraph{Characterization of the value.} 
The first results on the value of stochastic games go back to the mid 1960s. By adapting the tools developed by Howard \cite{howard60} for Markov decision problems, Hoffman and Karp \cite{HK66} obtained a characterization for the limit of the $\la$-discounted values in the irreducible case (that is, when any pair of stationary strategies induces an irreducible Markov chain), in the spirit of an average cost optimality equation. 
Soon after,  Blackwell and Ferguson \cite{BF68} determined the value of the ``Big Match'', an example of a stochastic game whose value depends on the initial state. 
In the mid 1970s, Kolhberg \cite{kohlberg74} introduced \emph{absorbing games}, a class of stochastic games in which there is at most one transition between states, and which 
includes the Big Match as a particular case.
Kohlberg proved that these games have a value, and provided a characterization using the derivative of Shapley's operator. Two additional characterizations for the value 
of absorbing games were obtained recently by Laraki \cite{laraki10} and by Sorin and Vigeral \cite{SV13}, respectively. 

\paragraph{Algorithms.} Whether the value of a finite stochastic game can be computed in polynomial time is a famous open problem in computer science. This problem is intriguing because the class of simple stochastic games is both NP and co-NP, and several important problems with this property have eventually been shown to be polynomial-time solvable, such as primality testing or linear programming. (A simple stochastic game is one where the transition function depends on one player's action at each state.)
The known algorithms fall into two categories: 
decision procedures for the first order theory of the reals, such as 
\cite{chatterjee08, etessami2006, SV10}, and value or
strategy iteration methods, such as \cite{chatterjee2006,rao73}. All of them are worst-case exponential in the number of states or in the number of actions.
Recently, Hansen, Kouck\'y, Lauritzen, Miltersen and
Tsigaridas \cite{HKMT11} achieved a remarkable improvement
by providing an algorithm which is polynomial in the number
of actions, for any {fixed number of states}. However,
the dependence on the number of states is both non-explicit and
doubly exponential. %, namely $2^{n^{O(n^2)}}$. %dependence on n of order 2nO(n2 ) .
 Based on the characterization of the
value obtained in the present paper, Oliu-Barton
\cite{oliubarton2018new2} improved the algorithm of Hansen et al. \cite{HKMT11} by
significantly reducing the dependence on the number of
states to an explicit polynomial dependence on the number of
pure stationary strategies. Although not polynomial in the
number of states, this algorithm is the most efficient
algorithm that is known today.

\subsection{Main results}\label{main}
As already argued, the value is a very robust solution
concept for stochastic games. Its existence was proved
% BvS explicit?
nearly 40 years ago, and an explicit characterization has been missing
since then. The main contribution of the present paper is to
provide a tractable formula for the value of stochastic
games. 

Our result relies on a new characterization of the
discounted values, which is obtained by reducing a
discounted stochastic game with $n$ states to $n$
independent parameterized matrix games, one for each initial
state. 

For the rest of the paper, 
$1\leq k\leq n$ denotes a fixed initial state.
The parameterized game that corresponds to $k$ is simply obtained by linearizing the ratio in \eqref{qut2} for all pairs of pure stationary strategies, as follows. 

\begin{definition}\label{defW} For any $z\in \R$, define the matrix $W^k_\la(z)$ of size $|I|^n\times |J|^n$ 
by setting
\begin{equation*}
W^k_\la(z)[\ii,\jj]:=d^k_\la(\ii,\jj) - z d^0_\la(\ii,\jj)
\qquad \forall (\ii,\jj)\in I^n\times J^n.
\end{equation*}  
\end{definition}

\paragraph{Theorem 1 (A formula for the discounted values).}
\emph{For any $\la\in(0,1]$, the value of the
 discounted stochastic game $(K,I,J,g,q,k,\la)$ is the
unique solution to 
\[z\in \R,\quad \val \, W_\la^k(z) =0\,.\]}

\paragraph{Theorem 2 (A formula for the value).}\emph{For any $z\in \R$, the limit $F^k(z):=\lim_{\la\to 0}\val~W^k_\la(z)/\la^n$
exists in $\R\cup\{\pm\infty\}$. The value of the stochastic game $(K,I,J,g,q,k)$ is 
the unique solution to 
\[
w\in \R,\quad \begin{cases} z> w &\ \Rightarrow\quad   F^k(z)<0 \\
z< w &\ \Rightarrow \quad  F^k(z)>0\,. \end{cases}\]}
%\[z> w \ \Rightarrow\   F^k(z)<0\quad  \text{ and }\quad z< w \ \Rightarrow \  F^k(z)>0\,.\]
%the unique real number $w$ so that
%\begin{equation*}
% BvS no number and hence no label (which you use later
% anyhow for a different equation)
% \label{ab}
%\begin{cases} z> v^k &\ \Rightarrow\quad   F^k(z)<0 \\
%z< v^k &\ \Rightarrow \quad  F^k(z)>0\, .\end{cases}\end{equation*}}

%\vspace{-0.7cm}
\paragraph{Comments}\begin{enumerate}
\item {Theorem 1} provides an \emph{uncoupled}
characterization of the discounted values. That is, each
initial state is considered separately. This property, which
contrasts with Shapley's \cite{shapley53} characterization,
provides the key to {Theorem 2}. 
\item {Theorem 1}
can be extended to stochastic games with
compact action spaces and continuous pay-off and transition
functions, but {Theorem~2} cannot because the discounted values may fail to
converge in this case.
\item {Theorem 2}
provides a new and elementary proof of the convergence of the $\la$-discounted values as $\la$ tends to $0$. 
\item {Theorem 2} captures the characterization of the value for absorbing games obtained by Kohlberg \cite{kohlberg74}.
\item The sign of $F^k(z)$ can be easily computed using
linear programming techniques. This is a crucial aspect of
the formula of {Theorem 2}. 
\item {Theorems 1} and {2}
suggest binary search algorithms for computing, respectively, the discounted values and the value, by successively evaluating the sign of $\val \, W^k_\la(z)$ and of
 $F^k(z)$ for well-chosen $z$. These algorithms are
 polynomial in the number of 
 pure stationary strategies. The precise description and
 analysis of these algorithms is the object of a separate
 paper \cite{oliubarton2018new2}. For completeness, we provide a brief
 description in Section \ref{algos}. 
 % BVS where? In Section ...
\end{enumerate} 

\section{A formula for the discounted values}\label{v_la}
In this section we prove {Theorem 1}.
In the sequel, we consider a fixed discounted stochastic game $(K,I,J,g,q,k,\la)$. 
The proof is based on the following four properties: \\[0.15cm]
% period not semicolon for readability (and because you use
% capitals
$1.$ \, $d^0_\la(\ii,\jj)$ is positive for all $(\ii,\jj)\in I^n\times J^n$.\\[0.1cm]
$2.$ \, $(x,y,z)\mapsto d^0_\la(x,y)-z d^k_\la(x,y)$ is a multi-linear map. \\[0.1cm]% from $\De(I)^n\times \De(J)^n$ to $\R$. % for $\ell=0$ and $\ell=k$;
$3.$ \, $z\mapsto \val\, W^k_\la(z)$ is a strictly decreasing real map. \\[0.1cm]
$4.$ \, $\val~W^k_\la(v^k_\la) =0$. \\[-0.1cm]

%\begin{enumerate}
%\item $d^0_\la(\ii,\jj)$ is positive for all $(\ii,\jj)\in I^n\times J^n$.
%\item $(x,y,z)\mapsto d^0_\la(x,y)-z d^k_\la(x,y)$ is a multi-linear map. % from $\De(I)^n\times \De(J)^n$ to $\R$. % for $\ell=0$ and $\ell=k$;
%\item $z\mapsto \val\, W^k_\la(z)$ is a strictly decreasing real map. 
%\item $\val~W^k_\la(v^k_\la) =0$\,.
%\end{enumerate}
% Indeed, \textbf{Theorem 1}
Indeed, {Theorem 1}
clearly follows from the last two. 
The extension of this result to the more general
framework of  compact-continuous stochastic games (that is,
stochastic games with compact metric action spaces and
continuous pay-off and transition functions) proceeds
along the same lines, and is postponed to Section~\ref{cc}.
 
% BvS no "Let us..." which sounds awfully like "Let Grandpa read us a story"
% Let us start by introducing some helpful notation :
\paragraph{Notation.} We use the following notation:
\begin{itemize}
\item
% BvS as in this example, where "we denote by" is much much better
% than "For any $x\in \De(I)^n$, $\hat{x}\in \De(I^n)$
% denotes the element corresponding to
For any $x=(x^1,\dots,x^n)\in \De(I)^n$ we denote by $\hat{x}\in \De(I^n)$ the element that corresponds to the
direct product of the coordinates of $x$. Formally, 
%the element that corresponds to the direct product of its
%coordinates. The map $x\mapsto \hat{x}$ is one-to-one, and defines an inclusion $\De(I)^n\subset \De(I^n)$. Formally, 
% namely: %i.e. %corresponding  element  direct product of $x^1,\dots,x^n$. That is:
\[
%\forall x\in \De(I)^n,\ \forall \ii\in I^n, \qquad 
\hat{x}(\ii):= \prod_{\ell=1}^n x^\ell(\ii^\ell)\qquad \forall\, \ii=(\ii^1,\dots,\ii^n)\in I^n.
\]
The map $x\mapsto \hat{x}$ is one-to-one, and defines the canonical inclusion $\De(I)^n\subset \De(I^n)$. 
The map $y\mapsto \hat{y}$ is defined similarly, and gives the canonical inclusion $\De(J)^n\subset \De(J^n)$.
 \item
 % BvS: In mathematics, elegant variation to describe the same
 % concept is misguided because it distracts and confuses.
 % Use the same words if you mean the same so the reader has
 % to think only once.
The bold letters $\xx$ and $\yy$ refer to elements of
 $\De(I^n)$ and $\De(J^n)$, respectively. 
 % BvS I think "stationary" is wrong
 % The set of pure stationary strategies $I^n$ may refer to a
% The set of pure stationary strategies $I^n$ is considered as a 
% subset of $\De(I^n)$ and of $\De(I)^n$. 
%Similarly, $\yy$ refers to an element of $\De(J^n)$,
%and the set of pure strategies 
%$J^n$ is considered as a subset of $\De(J^n)$ and of $\De(J)^n$. 

\item  For all $z\in \R$ and all $(\xx,\yy)\in \De(I^n)\times \De(J^n)$ we set
\[
 W^k_\la(z)[\xx,\yy]:=
%\sum_{(\ii,\jj)\in I^n\times J^n}\xx(\ii)\yy(\jj)W_\la^k(z)[\ii,\jj]\,.
 \sum_{(\ii,\jj)\in I^n\times J^n}\xx(\ii)\,W_\la^k(z)[\ii,\jj]\,\yy(\jj).
\]
  % \E_{\xx\otimes \yy}\left[W_\la^k(z)[\ii,\jj]\right]$ where $\xx\otimes \yy\in \De(I^n\times J^n)$ stands for the direct product of $\xx$ and $\yy$. 
 % BvS I omitted "one has" but such filler words are GOOD
 % to separate two mathematical expressions that would
 % otherwise only be separated by a comma.
\end{itemize} 

We now prove the four properties above.
The first is due to Ostrovski \cite{ostrovski37}, and
% we provide a short proof only for the sake of completeness.  
for completeness we provide a short proof.

 \begin{lemme}\label{l1} For any stochastic matrix $P$ of size $n\times n$ and any $\la\in(0,1]$, $\det(\Id-(1-\la)P)\geq \la^n$. 
 \end{lemme}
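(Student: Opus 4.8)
The plan is to reduce everything to the spectrum of $P$. Writing $\mu_1,\dots,\mu_n\in\CC$ for the eigenvalues of $P$ (counted with multiplicity), the eigenvalues of $\Id-(1-\la)P$ are exactly $1-(1-\la)\mu_i$, so that
\[
\det(\Id-(1-\la)P)=\prod_{i=1}^n\bigl(1-(1-\la)\mu_i\bigr).
\]
The entire argument then amounts to controlling each factor, and the two ingredients I need are a localization of the spectrum of $P$ and a lower bound on $|1-(1-\la)\mu_i|$.

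First I would localize the spectrum. Since $P$ is stochastic, Gershgorin's circle theorem places every eigenvalue $\mu$ in a disk centered at some diagonal entry $P_{\ell,\ell}\in[0,1]$ with radius $\sum_{\ell'\ne\ell}P_{\ell,\ell'}=1-P_{\ell,\ell}$; this disk is contained in the closed unit disk, hence $|\mu_i|\le 1$ for every $i$. Setting $a:=1-\la\in[0,1)$, the reverse triangle inequality then yields $|1-a\mu_i|\ge 1-a|\mu_i|\ge 1-a=\la$, so that every factor has modulus at least $\la$ and consequently $\prod_{i=1}^n|1-(1-\la)\mu_i|\ge\la^n$.

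The last and most delicate step is to upgrade this bound on the modulus into the claimed bound on the determinant itself, which requires showing that the product is a positive real number equal to the product of the moduli. Because $P$ is real, its non-real eigenvalues occur in conjugate pairs $\mu,\overline{\mu}$, whose two factors multiply to $|1-(1-\la)\mu|^2>0$; each real eigenvalue lies in $[-1,1]$, so its factor satisfies $1-(1-\la)\mu_i\ge 1-(1-\la)=\la>0$. Grouping the factors this way exhibits $\det(\Id-(1-\la)P)$ as a product of strictly positive terms, which is therefore equal to $\prod_{i=1}^n|1-(1-\la)\mu_i|$ and hence at least $\la^n$. I expect this bookkeeping of conjugate pairs — ensuring positivity so that the determinant, not merely its absolute value, exceeds $\la^n$ — to be the only genuine subtlety; the spectral localization and the triangle-inequality estimate are routine.
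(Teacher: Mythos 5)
Your proof is correct and follows essentially the same route as the paper: express the determinant as the product of eigenvalues, localize the spectrum by a Gershgorin-type argument, and secure positivity by bounding real eigenvalue factors below by $\la$ and grouping complex factors into conjugate pairs contributing $|1-(1-\la)\mu|^2\geq\la^2$. The only cosmetic difference is that you localize the spectrum of $P$ (unit disk) and transport it through $\mu\mapsto 1-(1-\la)\mu$, whereas the paper applies strict diagonal dominance directly to $\Id-(1-\la)P$ to keep its eigenvalues at distance at least $\la$ from the origin.
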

%  \begin{lemme}\label{l1} For any stochastic matrix $P$ of size $n\times n$ and any $\la\in(0,1]$, $\det(\Id-(1-\la)P)\geq \la^n$. 
% \end{lemme}
 \begin{proof} Set $M:=\Id-(1-\la)P$. Because $P$ is %\in \R^{n\times n}$ so that $\det M=d^0_\la(\ii,\jj)$. 
%Because $Q(\ii,\jj)$ is 
a stochastic matrix, $M^{\ell,\ell} - \sum_{\ell'\neq \ell } | M^{\ell, \ell'}|\geq \la$ for all $1\leq \ell\leq n$. Hence, $M$ is strictly diagonally dominant. For any $\mu \in \mathbb{R}$ so that $\mu < \lambda$, the matrix $M - \mu \Id$ is still strictly diagonally dominant, so in particular it is invertible. Consequently, all real eigenvalues of $M$ are larger than or equal to $\la$. Similarly, 
for any $\mu=a+bi \in \mathbb{C}$ so that $|\mu|:=\sqrt{a^2+b^2} < \lambda$, the matrix $M - \mu \Id$ is strictly diagonally dominant, so that $M - \mu \Id$ is invertible. 
Consequently, if $a+b i$ is a complex eigenvalue of $M$,
% here "then" is better since "one has" is not great
then $\la\leq |a+bi|$, so that $\la^2\leq |a+bi|^2=a^2+b^2=(a+bi)(a-bi)$.
% Since $\det(M)=\prod_{i=1}^n \mu_i$,
Recall that $\det M=\prod_{\ell=1}^n \mu_\ell$,
where $\mu_1,\dots,\mu_n$ are the eigenvalues of $M$ counted with multiplicities. Because each real eigenvalue contributes at least $\la$ in the product, and each pair of conjugate eigenvalues contributes at least $\la^2$, it clearly follows that $\det M\geq \la^n$. 
 \end{proof}

\begin{lemme}\label{l2} For any $(x,\jj)\in \De(I)^n\times J^n$ and $z\in \R$,
% one has: 
% BvS here the filler word is not needed because you have a new line
\begin{itemize}
\item[$(i)$] $d^0_\la(x,\jj)=\sum_{\ii\in I^n} \widehat{x}(\ii) d^0_\la(\ii,\jj)$.
\item[$(ii)$] $d^k_\la(x,\jj)=\sum_{\ii\in I^n} \widehat{x}(\ii) d^k_\la(\ii,\jj)$.
\item[$(iii)$] $W^k_\la(z)[\hat{x},\jj]= d^k_\la(x,\jj)-zd^0_\la(x,\jj)$. 
\end{itemize}
 \end{lemme}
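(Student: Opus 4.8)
The plan is to exploit the multilinearity of the determinant in its rows, together with the structural observation that each row of the relevant matrices depends on $x$ only through a single coordinate, and affinely so. Concretely, writing $M:=\Id-(1-\la)Q(x,\jj)$, formula \eqref{Qg1} gives $M^{\ell,\ell'}=\de_{\ell,\ell'}-(1-\la)\sum_{i\in I}x^\ell(i)\,q(\ell'\mid\ell,i,\jj^\ell)$, which exhibits row $\ell$ of $M$ as a function of the single probability vector $x^\ell\in\De(I)$, affine in it. Using $\sum_{i\in I}x^\ell(i)=1$ I would rewrite the entire $\ell$-th row as the convex combination $\sum_{i\in I}x^\ell(i)\,R_\ell(i)$, where $R_\ell(i)$ is the row obtained by replacing $x^\ell$ with the pure action $i$, i.e.\ $R_\ell(i)_{\ell'}=\de_{\ell,\ell'}-(1-\la)q(\ell'\mid\ell,i,\jj^\ell)$.

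To prove $(i)$, I would expand $d^0_\la(x,\jj)=\det M$ by multilinearity in the $n$ rows. Since row $\ell$ is the above convex combination indexed by $i^\ell\in I$, the expansion produces a sum over all $\ii=(i^1,\dots,i^n)\in I^n$, with coefficient $\prod_{\ell=1}^n x^\ell(i^\ell)=\widehat{x}(\ii)$ multiplying the determinant in which every row $\ell$ uses the pure action $i^\ell$; that determinant is exactly $d^0_\la(\ii,\jj)$, yielding $(i)$. For $(ii)$ the argument is identical: the matrix defining $d^k_\la$ differs from $M$ only in its $k$-th column, whose $\ell$-th entry is $\la g^\ell(x,\jj)=\sum_{i\in I}x^\ell(i)\,\la g(\ell,i,\jj^\ell)$ by \eqref{Qg2}, again affine in $x^\ell$ with the same convex-combination structure. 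Hence the same row-by-row expansion applies and gives $d^k_\la(x,\jj)=\sum_{\ii\in I^n}\widehat{x}(\ii)\,d^k_\la(\ii,\jj)$.

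Finally, $(iii)$ follows by unwinding the definitions. Viewing the pure strategy $\jj$ as the corresponding point mass $\widehat{\jj}\in\De(J^n)$, so that $\widehat{\jj}(\jj')=\ind\{\jj'=\jj\}$, the bilinear extension of $W^k_\la(z)$ collapses the sum over Player~2's strategies to the single index $\jj$, giving $W^k_\la(z)[\widehat{x},\jj]=\sum_{\ii\in I^n}\widehat{x}(\ii)\,W^k_\la(z)[\ii,\jj]$. Substituting $W^k_\la(z)[\ii,\jj]=d^k_\la(\ii,\jj)-z\,d^0_\la(\ii,\jj)$ and applying $(i)$ and $(ii)$ term by term then yields $d^k_\la(x,\jj)-z\,d^0_\la(x,\jj)$, as claimed.

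The computation is essentially routine; the only point requiring care---and the conceptual heart of the lemma---is the reduction of each row to a convex combination of pure-action rows. This relies crucially on $x^\ell$ being a probability vector, so that the diagonal contribution $\de_{\ell,\ell'}$ coming from $\Id$ is correctly recovered as $\sum_{i\in I}x^\ell(i)\,\de_{\ell,\ell'}$ and distributed across the combination; for an arbitrary vector $x^\ell$ this identity would fail. The product form $\widehat{x}(\ii)=\prod_{\ell}x^\ell(\ii^\ell)$ then emerges automatically from multilinearity, precisely because row $\ell$ of each matrix depends on $x$ only through its $\ell$-th coordinate $x^\ell$.
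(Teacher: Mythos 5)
Your proof is correct and follows essentially the same route as the paper: both exploit the fact that row $\ell$ of $\Id-(1-\la)Q(x,\jj)$ (and of its modification with the $k$-th column replaced by $\la g(x,\jj)$) depends on $x$ only through $x^\ell$, affinely, and then expand the determinant by multilinearity over the rows---the paper phrases this as an induction, one row at a time, while you expand all rows simultaneously, which is the same argument. Your closing remark about the role of $\sum_i x^\ell(i)=1$ in distributing the $\de_{\ell,\ell'}$ term is exactly the convex-combination step the paper invokes when writing $x$ as a combination of the strategies $(i,x^2,\dots,x^n)$.
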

 \begin{proof} $(i)$ Let $\jj\in J^n$ be fixed. For any $x\in \De(I^n)$ set 
$M(x,\jj):=\Id-(1-\la)Q(x,\jj)$, so that $\det M(x,\jj)=d^0_\la(x,\jj)$ and, in particular, $\det M(\ii,\jj)=d^0_\la(x,\jj)$ for all $\ii\in I^n$.
By \eqref{Qg1}, the first row of $M(x,\jj)$ depends on $x$ only through $x^1$, and the dependence is linear. Write $x$ as a convex combination of the stationary strategies $\{(i,x^2,\dots,x^n), \ i\in I\}$, and use the
multi-linearity of the determinant to obtain
% BvS when you read a sequence of equations you do not need
% a comma; some cosmetics with larger parentheses in line 2
\begin{eqnarray*}\det M(x,\jj)&=&\det\left(\sum\nolimits_{i\in I}x^1(i)M((i,x^2,\dots,x^n), \jj)\right)\\
&=& \sum\nolimits_{i \in I}x^1(i)\det
M\bigl((i,x^2,\dots,x^n), \jj\bigr)\,.
\end{eqnarray*}
Using the same argument for the remaining rows, one inductively obtains that 
$\det M(x,\jj)$ is equal to
\[\sum_{\ii^1\in I} x^1(\ii^1)\sum_{\ii^2\in I} x^2(\ii^2)\dots
\sum_{\ii^n\in I} x^n(\ii^n) 
\det M\bigl((\ii^1,\ii^2,\dots,\ii^n), \jj\bigr),\]
which is equal to $\sum_{\ii\in I^n} \widehat{x}(\ii) \det M(\ii, \jj)$ by the definition of $\hat{x}$. \\[0.15cm] 
%\begin{eqnarray*}
%\det M(x,\jj)&=&\sum_{\ii^1\in I} x^1(\ii^1)\dots
%\sum_{\ii^n\in I} x^n(\ii^n) 
%\det M\bigl((\ii^1,\ii^2,\dots,\ii^n), \jj\bigr)\\
%\begin{eqnarray*}
%\det M(x,\jj)&=&\sum_{\ii^1\in I} x^1(\ii^1)\dots
%um_{\ii^n\in I} x^n(\ii^n) 
%\det M\bigl((\ii^1,\ii^2,\dots,\ii^n), \jj\bigr)\\
% &=& \sum_{\ii\in I^n} \widehat{x}(\ii) \det M(\ii, \jj)\,.
%\end{eqnarray*}
%which gives the desired result.\\[0.15cm]  
$(ii)$ The proof goes along the same lines as $(i)$. Fix $\jj\in J^n$. For any $x\in \De(J)^n$, let $M^k(x,\jj)$ be the matrix obtained by replacing the
$k$-th column of $M(x,\jj)$ by $\la g(x,\jj)$, so that $\det M^k(x,\jj)=d^k_\la(x,\jj)$ and, in particular, $\det M^k(\ii,\jj)=d^k_\la(\ii,\jj)$ for all $\ii\in I^n$. By \eqref{Qg1} and \eqref{Qg2}, 
the $\ell$-th row of $M^k(x,\jj)$ depends on $x$ only through $x^\ell$, and that the dependence is linear. Like in $(i)$, these properties imply 
the desired result, namely $\det M^k(x,\jj)=\sum\nolimits_{\ii\in I^n} \widehat{x}(\ii) \det M^k(\ii, \jj)$. \\[0.15cm]  
%the desired result. 
%\begin{align*}
%k_\la(x,\jj)&=\det M^k(x,\jj)=\sum\nolimits_{\ii\in I^n} \widehat{x}(\ii) \det M^k(\ii, \jj)\\
%&=\sum\nolimits_{\ii\in I^n} \widehat{x}(\ii) d^k_\la(\ii, \jj).
%\end{align*}
%\\[0.15cm]  
%\[d^k_\la(x,\jj)=\det M^k(x,\jj)=\sum\nolimits_{\ii\in I^n} \widehat{x}(\ii) \det M^k(\ii, \jj)=\sum\nolimits_{\ii\in I^n} \widehat{x}(\ii) d^k_\la(\ii, \jj).\]
$(iii)$ The result follows directly from $(i)$, $(ii)$, and the definition of $W^k_\la(z)[\widehat{x},\jj]$. Indeed,
\begin{eqnarray*}
W^k_\la(z)[\widehat{x},\jj]&= &
\sum\nolimits_{\ii\in I^n} \widehat{x}(\ii) W_\la^k(z)[\ii,\jj] 
\\&=& 
\sum\nolimits_{\ii\in I^n} \widehat{x}(\ii)
d^k_\la(\ii,\jj) - z \sum\nolimits_{\ii\in I^n}
\widehat{x}(\ii) d^0_\la(\ii,\jj)\\
&=& d^k_\la(x,\jj) - z  d^0_\la(x,\jj).
\end{eqnarray*} 
\end{proof}

%\begin{remarque}\label{remz} 
\paragraph{Remark.} Lemma \ref{l2} is stated for all $(x,\jj)$ for convenience, but is also valid for all $(x,y)$. The last property, for instance, can be stated as follows. 
For all $(x,y,z)\in \De(I)^n\times \De(J)^n\times \R$, 
 %The last property can be stated as follows:
\[W^k_\la(z)[\hat{x},\hat{y}]= d^k_\la(x,y)-zd^0_\la(x,y).\]
% For all  
% if we replace $\jj$ with $y\in \De(J)^n$ in $(i)$ and $(ii)$ ad 
%\end{remarque} 

%\bigskip 

\begin{lemme}\label{ineq} For any $(z_1,z_2)\in \R^2$ 
so that $z_1<z_2$, 
\begin{equation*}\label{ineq}\val~W^k_\la(z_1) -\val~W^k_\la(z_2) \geq (z_2-z_1)  \la^n\, .\end{equation*} 
In particular, $z\mapsto \val \, W^k_\la(z)$ is a strictly decreasing real map. 
\end{lemme}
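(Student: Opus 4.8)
The plan is to exploit the fact that the two matrices $W^k_\la(z_1)$ and $W^k_\la(z_2)$ differ in a controlled, entrywise manner, and then to invoke the monotonicity of the value operator. First I would compute, for each pair of pure stationary strategies $(\ii,\jj)\in I^n\times J^n$, the entrywise difference
\[
W^k_\la(z_1)[\ii,\jj]-W^k_\la(z_2)[\ii,\jj]=(z_2-z_1)\,d^0_\la(\ii,\jj),
\]
which follows immediately from Definition~\ref{defW}, since the $d^k_\la(\ii,\jj)$ terms cancel.

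Next I would bound $d^0_\la(\ii,\jj)$ from below. For pure stationary strategies the transition matrix $Q(\ii,\jj)$ is genuinely stochastic, so Lemma~\ref{l1} gives $d^0_\la(\ii,\jj)=\det(\Id-(1-\la)Q(\ii,\jj))\geq \la^n$. Combining this with the identity above and using $z_2-z_1>0$ yields the entrywise inequality
\[
W^k_\la(z_1)[\ii,\jj]\geq W^k_\la(z_2)[\ii,\jj]+(z_2-z_1)\la^n\qquad\forall(\ii,\jj)\in I^n\times J^n.
\]
In words, $W^k_\la(z_1)$ dominates, entry by entry, the matrix obtained from $W^k_\la(z_2)$ by adding the constant $(z_2-z_1)\la^n$ to every entry.

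Finally I would conclude by combining two elementary facts about matrix games. The value operator is translation-invariant: adding a constant $c$ to every entry of a matrix adds $c$ to its value, because for any $(\xx,\yy)\in\De(I^n)\times\De(J^n)$ the bilinear pay-off increases by exactly $\sum_{\ii,\jj}\xx(\ii)\,c\,\yy(\jj)=c$. Moreover, the value operator is monotone by Property~$(ii)$. Applying both to the entrywise inequality gives
\[
\val\,W^k_\la(z_1)\geq \val\,W^k_\la(z_2)+(z_2-z_1)\la^n,
\]
which is the claimed inequality. The strict monotonicity of $z\mapsto\val\,W^k_\la(z)$ then follows at once, since $\la^n>0$ and $z_2-z_1>0$.

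I expect the argument to be routine; the only point requiring care is the passage from the entrywise inequality to an inequality between values. This rests on translation-invariance of the value together with Property~$(ii)$, and is legitimate precisely because the two matrices $W^k_\la(z_1)$ and $W^k_\la(z_2)$ are played on identical (mixed) strategy spaces $\De(I^n)$ and $\De(J^n)$, so the comparison is between two games over the same strategy sets. The sharpness of the bound comes entirely from the uniform lower bound $d^0_\la(\ii,\jj)\geq\la^n$ supplied by Lemma~\ref{l1}.
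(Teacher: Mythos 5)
Your proposal is correct and follows essentially the same route as the paper's own proof: compute the entrywise difference $(z_2-z_1)\,d^0_\la(\ii,\jj)$, bound it below by $(z_2-z_1)\la^n$ via Lemma~\ref{l1}, and conclude by monotonicity of the value operator (Property~$(ii)$). The only difference is that you spell out the translation-invariance step $\val(M+cU)=\val M+c$, which the paper leaves implicit when it says the result ``follows from monotonicity''---a worthwhile clarification, but not a different argument.
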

\begin{proof} 
By definition, $Q(\ii,\jj)$ is a stochastic matrix of size $n\times n$ for each $(\ii,\jj)\in I^n\times J^n$. Hence, by Lemma  \ref{l1}, 
\[d^0_\la(\ii,\jj)=\det(\Id-(1-\la)Q(\ii,\jj))\geq \la^n\qquad \forall (\ii,\jj)\in I^n\times J^n\,.\] Therefore, for all $z_1<z_2$ and $(\ii,\jj)$,%. Therefore%the definito definition 
\begin{eqnarray*}
W^k_\la(z_1)[\ii,\jj]- W^k_\la(z_2)[\ii, \jj]
%&=& d^k_\la(\ii,\jj) - z_1  d^0_\la(\ii,\jj)
%  - d^k_\la(\ii,\jj) + z_2  d^0_\la(\ii,\jj)
 &=&
(z_2-z_1) d^0_\la(\ii,\jj)\\
&\geq &  (z_2-z_1) \la^n\,.
\end{eqnarray*}
%Consequently, the matrices $W^k_\la(z_1)$ and $W^k_\la(z_2)$ satif
%Hence, by Lemma \ref{l1},
%W^k_\la(z_1)[\ii,\jj]- W^k_\la(z_2)[\ii, \jj]\
The result follows then from the monotonicity of the value operator, stated in item $(ii)$ of Section \ref{NF}. % gives then the desired result.
 \end{proof}

\begin{lemme}
\label{val=0}
% $\mathrm{val}\, W^k_\la(v^k_\la)= 0$\,.
% BvS: I think you want to use the macro \val
% before W a full space ~ is better than a small space \,
$\val~W^k_\la(v^k_\la)= 0$\,.
\end{lemme}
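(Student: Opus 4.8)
The plan is to show that the finite matrix game $W^k_\la(v^k_\la)$ has value both $\geq 0$ and $\leq 0$, by exhibiting explicit guaranteeing strategies for each player built from their optimal stationary strategies in the discounted game. Write $v:=v^k_\la$ for brevity. By Shapley's theorem (item $(v)$ of Section~\ref{NF2}), both players have optimal \emph{stationary} strategies $x^*\in\De(I)^n$ and $y^*\in\De(J)^n$; by definition of optimality these satisfy $\ga^k_\la(x^*,\tau)\geq v$ for every $\tau\in\TT$ and $\ga^k_\la(\si,y^*)\leq v$ for every $\si\in\Sigma$. The point is that the products $\hat{x^*}\in\De(I^n)$ and $\hat{y^*}\in\De(J^n)$ are legitimate mixed strategies in the matrix game $W^k_\la(v)$, and Lemma~\ref{l2} tells us exactly what pay-off they produce.

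For the lower bound, I would play $\hat{x^*}$ and test it against an arbitrary pure column $\jj\in J^n$. By Lemma~\ref{l2}$(iii)$,
\[
W^k_\la(v)[\hat{x^*},\jj]=d^k_\la(x^*,\jj)-v\,d^0_\la(x^*,\jj).
\]
Since $Q(x^*,\jj)$ is stochastic, Lemma~\ref{l1} gives $d^0_\la(x^*,\jj)\geq \la^n>0$, so I can factor it out and use \eqref{qut2} to recognize the discounted pay-off:
\[
W^k_\la(v)[\hat{x^*},\jj]=d^0_\la(x^*,\jj)\bigl(\ga^k_\la(x^*,\jj)-v\bigr).
\]
Because a pure stationary $\jj$ is in particular a strategy in $\TT$, optimality of $x^*$ forces $\ga^k_\la(x^*,\jj)\geq v$, hence the right-hand side is nonnegative for every pure $\jj$. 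As $\yy\mapsto W^k_\la(v)[\hat{x^*},\yy]$ is linear, it is nonnegative on all of $\De(J^n)$, so $\hat{x^*}$ guarantees at least $0$ and $\val\,W^k_\la(v)\geq 0$.

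The upper bound is entirely symmetric: playing $\hat{y^*}$ against an arbitrary pure row $\ii\in I^n$ and using the row-analogue of Lemma~\ref{l2} (valid for all pairs, as noted in the Remark following that lemma) gives
\[
W^k_\la(v)[\ii,\hat{y^*}]=d^0_\la(\ii,y^*)\bigl(\ga^k_\la(\ii,y^*)-v\bigr)\leq 0,
\]
since $d^0_\la(\ii,y^*)>0$ and optimality of $y^*$ yields $\ga^k_\la(\ii,y^*)\leq v$. Thus $\hat{y^*}$ guarantees at most $0$ for Player~2 and $\val\,W^k_\la(v)\leq 0$. Combining the two bounds gives $\val\,W^k_\la(v)=0$. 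I do not anticipate a real obstacle here; the only points requiring care are that optimality of the stationary strategies must be invoked against \emph{pure} stationary opponents (legitimate since $I^n\subset\Sigma$ and $J^n\subset\TT$), and that the strict positivity of $d^0_\la$ from Lemma~\ref{l1} is what licenses the passage between the linearized matrix entries of $W^k_\la$ and the ratio \eqref{qut2}.
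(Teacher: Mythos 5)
Your proposal is correct and follows essentially the same route as the paper: linearize via Lemma~\ref{l2}$(iii)$, invoke Shapley's optimal stationary strategy $x^*$ against pure stationary columns $\jj$, use the positivity $d^0_\la(x^*,\jj)\geq\la^n$ from Lemma~\ref{l1} to convert the pay-off inequality $\ga^k_\la(x^*,\jj)\geq v^k_\la$ into nonnegativity of the matrix entries, conclude $\val\,W^k_\la(v^k_\la)\geq 0$ by guaranteeing with $\hat{x}^*$, and finish by symmetry. The only cosmetic difference is that you factor the entry as $d^0_\la(x^*,\jj)\bigl(\ga^k_\la(x^*,\jj)-v\bigr)$ whereas the paper states the equivalent inequality $d^k_\la(x^*,\jj)-v^k_\la d^0_\la(x^*,\jj)\geq 0$ directly; the substance is identical.
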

\begin{proof} 
By Lemma \ref{l2} $(iii)$, the relation \begin{equation}\label{pkp}W^k_\la(v^k_\la)[\widehat{x},\jj]=d^k_\la(x,\jj)- v^k_\la
d^0_\la(x,\jj)\end{equation} holds 
for all $(x,\jj)\in \De(I)^n \times J^n$. %, % and $z=v_\la^k$, 
%one has: 
%\[W^k_\la(v^k_\la)[\widehat{x},\jj]=d^k_\la(x,\jj)- v^k_\la
%d^0_\la(x,\jj).\]
% BvS commas should be parenthetical
Let $x^*\in \De(I)^n$ be an optimal stationary
strategy of Player $1$ in $(K,I,J,g,q,k,\la)$, which exists by Shapley \cite{shapley53} as already noted in item $(v)$ of Section~\ref{NF2}, and let 
$\widehat{x}^*\in \De(I^n)$ denote the direct product of its coordinates. The optimality of $x^*$ implies
\begin{equation*}
\ga_\la^k(x^*,\jj)=\frac{d^k_\la(x^*,\jj)}{d^0_\la(x^*,\jj)}\geq
v^k_\la.
\end{equation*}
The matrix $Q(x^*,\jj)$ is stochastic of size $n\times n$ so that  $d^0_\la(x^*,\jj)=\det(\Id-(1-\la)Q(x^*,\jj))\geq \la^n>0$ by Lemma \ref{l1}. Consequently, the previous relation is equivalent to 
 \begin{equation}\label{pkp2}d^k_\la(x^*,\jj)- v^k_\la
d^0_\la(x^*,\jj)\geq 0.\end{equation}
%Let $\widehat{x}^*\in \De(I^n)$ be the direct product of the coordinates of $x^*$. 
Therefore, $W^k_\la(v^k_\la)[\widehat{x}^*,\jj]\geq 0$ follows from \eqref{pkp} and \eqref{pkp2}. 
For any matrix $M=(m_{a,b})$ of size $p\times q$ and any $s\in \De(\{1,\dots,p\})$, the definition of the value implies that $\val\,  M\geq \min_{1\leq b\leq q} \sum_{1\leq a\leq p} s(a) m_{a,b}$.  
 %$\val\, M \geq \min_{\jj\in J^n} W^k_\la(z)[\xx,\jj]$ for all $\xx\in \De(I^n)$. 
 Consequently, 
\begin{equation*}\label{aa1} 
\val~W^k_\la(v^k_\la) \geq \min_{\jj\in J^n} W^k_\la(v^k_\la)[\widehat{x}^*,\jj]\geq 0\, .
\end{equation*}
By reversing the roles of the players one similarly obtains
an analogue of Lemma \ref{l2} for all $(\ii,y)\in I^n \times \De(J)^n$, and
% then $\mathrm{val}~W^k_\la(v^k_\la)\leq 0$, which gives the desired result. 
then $\val~W^k_\la(v^k_\la)\leq 0$, which gives the desired result. 
\end{proof} 

%\bigskip 

\paragraph{Proof of Theorem 1.} By Lemma \ref{ineq}, 
$z\mapsto \val \, W^k_\la(z)$ is a strictly decreasing real function. 
Consequently, the set
% I suggest no display
% $$\{z\in\R,\ \val~W^k_\la(z)=0\}$$
$\{z\in\R,\ \val~W^k_\la(z)=0\}$
contains at most one element.
By Lemma \ref{val=0}, this element is precisely $v^k_\la$. \hfill $\blacksquare$
%\myendproof

\section{A formula for the value}\label{the_limit}
% This section is devoted to the proof of \textbf{Theorem 2}.
In this section we prove {Theorem 2}.
% Before we establish this result, let us show that 
Before we establish this result, we show that 
the limit $F^k(z):=\lim_{\la\to 0} \val\, W^k_\la(z)/ \la^n$ exists in $\R\cup\{-\infty,+\infty\}$ for all $z\in \R$, and that the equation
\begin{equation}\label{ab}
w\in \R,\quad \begin{cases} z> w &\ \Rightarrow\quad   F^k(z)<0 \\
z< w &\ \Rightarrow \quad  F^k(z)>0\, \end{cases}\end{equation} 
admits a unique solution.
% BvS too grand
% The following two lemmas will settle these questions. 
This is shown in the following two lemmas.
\begin{lemme}\label{rty}
\label{rational}
% Fix $z\in \R$.
Let $z\in \R$.
Then, there exists a rational fraction $R$ and $\la_0>0$ so that
\[
\val\, W^k_\la(z)=R(\la)
\qquad \forall \la\in(0,\la_0)\,.
\]
\end{lemme}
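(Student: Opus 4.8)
The plan is to show that $\val\, W^k_\la(z)$ is, for small $\la$, a rational function of $\la$ by invoking the algebraic formula for the value of a matrix game, Property $(iv)$ of Section~\ref{NF}. Recall that by Property $(iv)$, for any matrix $M$ there is a square sub-matrix $\hat M$ with $\val\, M = \det\hat M / \varphi(\hat M)$. Applied to $M = W^k_\la(z)$, whose entries are $d^k_\la(\ii,\jj)-z\,d^0_\la(\ii,\jj)$, this expresses the value as a ratio of a determinant and a sum of co-factors of some square sub-matrix indexed by a subset of pure stationary strategy pairs. The key observation I would stress is that each entry of $W^k_\la(z)$ is, for fixed $z$, a \emph{polynomial} in $\la$: indeed $d^0_\la(\ii,\jj)=\det(\Id-(1-\la)Q(\ii,\jj))$ is a polynomial in $\la$ (the determinant of a matrix whose entries are affine in $\la$), and likewise $d^k_\la(\ii,\jj)$ is a polynomial in $\la$ since it is a determinant of a matrix with entries affine in $\la$ (the replaced column being $\la\, g(\ii,\jj)$). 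Hence every $\det\hat M$ and every co-factor sum $\varphi(\hat M)$ is a polynomial in $\la$, and so each candidate ratio $\det\hat M/\varphi(\hat M)$ is a rational fraction in $\la$.

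The difficulty is that Property $(iv)$ only guarantees that for \emph{each fixed} $\la$ \emph{some} sub-matrix realizes the value; a priori the selected sub-matrix $\hat M$ could vary with $\la$, so $\val\, W^k_\la(z)$ might only be piecewise rational rather than genuinely rational on an interval. To handle this, I would argue by a finiteness-plus-accumulation argument. There are only finitely many square sub-matrices of $W^k_\la(z)$ (the index sets are subsets of the finite set $I^n\times J^n$). For each such sub-matrix $\hat M$, the rational fraction $R_{\hat M}(\la):=\det\hat M(\la)/\varphi(\hat M)(\la)$ is defined and analytic except at the finitely many zeros of the denominator polynomial $\varphi(\hat M)$. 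By Property $(iv)$, for every $\la\in(0,1]$ the value $\val\, W^k_\la(z)$ coincides with $R_{\hat M}(\la)$ for at least one $\hat M$ in this finite list. By the pigeonhole principle, at least one fixed sub-matrix $\hat M_0$ must realize the value on a subset of $(0,1]$ that accumulates at $0$.

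It then remains to upgrade ``coincides on a set accumulating at $0$'' to ``coincides on a whole interval $(0,\la_0)$''. For this I would invoke continuity: $\la\mapsto \val\, W^k_\la(z)$ is continuous on $(0,1]$ by Property $(iii)$ of Section~\ref{NF}, since the entries of $W^k_\la(z)$ depend continuously (indeed polynomially) on $\la$; and $R_{\hat M_0}$ is continuous away from its finitely many poles. The set where two continuous functions agree is closed in the domain where both are defined, and the difference $\val\, W^k_\la(z)-R_{\hat M_0}(\la)$ is a continuous function whose zero set contains a sequence converging to $0$. To conclude agreement on an entire interval, I would note that on each of the finitely many intervals cut out by the poles of all the $R_{\hat M}$, the value must coincide with one of the finitely many rational fractions, and two distinct rational fractions agree only on a finite set; hence on the interval $(0,\la_0)$ whose right endpoint is the smallest positive pole or crossing point, $\val\, W^k_\la(z)$ equals a single rational fraction $R:=R_{\hat M_0}$ identically. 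This yields the desired $\la_0>0$ and rational $R$, completing the proof. The main obstacle, as indicated, is ruling out infinitely many switches of the optimal sub-matrix as $\la\to 0$, which the finiteness of the sub-matrix family together with the rigidity of rational functions resolves cleanly.
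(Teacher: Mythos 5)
Your proof is correct and follows essentially the same route as the paper's: the Shapley--Snow formula yields finitely many candidate rational fractions, and the continuity of $\la\mapsto \val\, W^k_\la(z)$ combined with the fact that two non-congruent rational fractions agree only finitely often forces the value to coincide with a single fraction on some interval $(0,\la_0)$. The extra pigeonhole/accumulation step you insert is harmless but redundant once the no-switching argument near $0$ is in place.
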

\begin{proof} 
 By construction, %On the other hand, 
 the entries of $W^k_\la(z)$ are polynomials in $\la$. By Shapley and Snow \cite{SS50}, the value of a matrix satisfies the formula stated in item $(iv)$ of Section~\ref{NF}. Consequently, for any $\la\in (0,1]$, there exists a rational fraction $R$ 
% such that: BvS "such that" is often better "so that"
% because "such" requires a subject (which is here the
% "rational fraction" which is OK, but you don't always have
% that
so that $\val\, W^k_\la(z) =R(\la)$. 
Because the choice of the square sub-matrix may vary with $\la$, the corresponding rational fraction may also vary. However, as the number of possible square sub-matrices is finite, so is the number of possible rational fractions that may satisfy this equality. 
Consequently,  
there exists a finite collection $E=\{R_1,\dots,R_L\}$  of rational fractions
% BvS here "such" has an unclear referent
% such that,
so that
for each $\la\in(0,1]$ there exists $R\in E$ that satisfies
$\val\, W^k_\la(z)=R(\la)$. Hence, for any $\la$, the point $(\la, \val\, W^k_\la(z))$ belongs to the union of the graphs of the functions $R_1,\dots,R_L$. 
As already noted in item $(iii)$ of Section~\ref{NF}, the map $\la\mapsto \val\, W^k_\la(z)$ is continuous on $(0,1]$. Consequently, as $\la$ varies on the interval $(0,1]$, the curve $\la\mapsto (\la, \val\, W^k_\la(z))$ can ``jump'' from the graph of $R$ to the graph of $R'$ only at points where these two graphs intersect. Yet, for any two rational fractions, either they are congruent or they intersect finitely many times. Hence, there exists $\la_0$ so that, for any $R,R'\in E$, either $R(\la)=R'(\la)$ for all $(0,\la_0)$ or $R(\la)\neq R'(\la)$ for all $(0,\la_0)$. 
In particular, there exists $R\in E$ so that $\val\, W^k_\la(z)=R(\la)$ for all $(0,\la_0)$. 
\end{proof}

\begin{lemme} \label{char} Equation \eqref{ab} admits a unique solution. 
\end{lemme}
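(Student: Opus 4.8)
The plan is to study the extended-real-valued function $F^k:\R\to\R\cup\{\pm\infty\}$ directly, exploiting two features: that it is well defined, and that it inherits a quantitative strict monotonicity from Lemma~\ref{ineq}. First I would record that $F^k(z)$ exists in $\R\cup\{\pm\infty\}$ for every $z$. By Lemma~\ref{rty} there is a rational fraction $R$ with $\val\,W^k_\la(z)=R(\la)$ for all small $\la$; writing $R=P/Q$ with $P,Q$ polynomials and factoring out the leading power of $\la$ from $P$ and from $\la^n Q$, the quotient $R(\la)/\la^n$ converges as $\la\to 0^+$ either to $0$, to a nonzero real, or to $\pm\infty$, depending on the resulting exponent. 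This is exactly the existence claim stated just before the lemma.

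Next I would transfer the monotonicity of Lemma~\ref{ineq} to $F^k$. Dividing the inequality of Lemma~\ref{ineq} by $\la^n>0$ gives, for $z_1<z_2$ and every $\la\in(0,1]$,
\[
\frac{\val\,W^k_\la(z_1)}{\la^n}-\frac{\val\,W^k_\la(z_2)}{\la^n}\geq z_2-z_1.
\]
Both quotients converge in $\R\cup\{\pm\infty\}$ by the previous step, and since an inequality valid for every $\la$ is preserved upon passing to the limit in the extended reals, this yields the key gap estimate $F^k(z_1)\geq F^k(z_2)+(z_2-z_1)$. In particular $F^k$ is strictly decreasing and equals $0$ at most once.

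Then I would localize the sign change. Using $d^k_\la=\ga^k_\la d^0_\la$ from \eqref{qut2} together with the elementary bound $|\ga^k_\la(\ii,\jj)|\leq\max_{\ell,i,j}|g(\ell,i,j)|=:C$ (an expectation of a normalized discounted sum of rewards), each entry of $W^k_\la(z)$ equals $(\ga^k_\la(\ii,\jj)-z)\,d^0_\la(\ii,\jj)$, with $d^0_\la(\ii,\jj)\geq\la^n$ by Lemma~\ref{l1}. Invoking the standard bound $\min_{a,b}m_{a,b}\leq\val\,M\leq\max_{a,b}m_{a,b}$: for $z>C$ every entry is at most $(C-z)\la^n<0$, so $F^k(z)\leq C-z<0$; symmetrically, for $z<-C$ every entry is at least $(-C-z)\la^n>0$, so $F^k(z)\geq -C-z>0$. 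Hence the set $A:=\{z:F^k(z)>0\}$ is nonempty (it contains $(-\infty,-C)$) and bounded above by $C$.

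Finally I would set $w:=\sup A\in[-C,C]$ and verify that it solves \eqref{ab}. If $z>w$, choose $z'$ with $w<z'<z$; then $z'\notin A$, so $F^k(z')\leq 0$, and the gap estimate gives $F^k(z)\leq F^k(z')-(z-z')<0$. If $z<w$, there is $z''\in A$ with $z''>z$, and the gap estimate gives $F^k(z)\geq F^k(z'')+(z''-z)>0$; notably the value $F^k(w)$ itself is never needed. Uniqueness is then immediate, since two distinct solutions $w_1<w_2$ would force, for any $z\in(w_1,w_2)$, both $F^k(z)<0$ and $F^k(z)>0$. The only delicate point—and the step most likely to need care—is the bookkeeping with $\pm\infty$ when passing Lemma~\ref{ineq} to the limit and when applying the gap estimate, so that every inequality stays meaningful in $\R\cup\{\pm\infty\}$; everything else is a routine consequence of strict monotonicity.
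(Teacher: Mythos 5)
Your proof is correct, and while it shares its skeleton with the paper's proof (existence of $F^k(z)$ via Lemma \ref{rty}, the gap estimate $F^k(z_1)\geq F^k(z_2)+z_2-z_1$ obtained by dividing Lemma \ref{ineq} by $\la^n$ and passing to the limit, and the identical uniqueness argument), the existence half follows a genuinely different route. The paper localizes the sign change by combining the bound $C^-\leq v^k_\la\leq C^+$ with Lemma \ref{val=0} and the monotonicity from Lemma \ref{ineq} to get the weak inequalities $F^k(C^+)\leq 0\leq F^k(C^-)$, and then constructs the solution by an explicit bisection: two sequences $(u^-_m)_m$ and $(u^+_m)_m$ squeezing a common limit $\bar u$, which is shown to solve \eqref{ab} using the gap estimate. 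You instead argue entrywise: writing $W^k_\la(z)[\ii,\jj]=\left(\ga^k_\la(\ii,\jj)-z\right)d^0_\la(\ii,\jj)$ via \eqref{qut2}, and bounding $|\ga^k_\la(\ii,\jj)|\leq C$ and $d^0_\la(\ii,\jj)\geq\la^n$ (Lemma \ref{l1}), you obtain the strict quantitative bounds $F^k(z)\leq C-z<0$ for $z>C$ and $F^k(z)\geq -C-z>0$ for $z<-C$, then take $w:=\sup\{z\in\R : F^k(z)>0\}$ and verify \eqref{ab} directly from the gap estimate. Your route is more self-contained: it never invokes Lemma \ref{val=0}, hence does not rely on Shapley's optimal stationary strategies at this point, and the supremum construction is shorter than the bisection. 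What the paper's route buys in exchange is reuse of machinery already established for Theorem 1, and a bisection argument that anticipates the binary-search algorithm of Section \ref{algos}. Your bookkeeping with $\pm\infty$ is also sound: the gap estimate survives the limit in the extended reals, and your argument never needs to evaluate $F^k(w)$ itself.
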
 
\begin{proof} By Lemma \ref{rty}, $\lim_{\la\to 0}\val \, W^k_\la(z)/ \la^n$ exists for all $z\in \R$. 
Suppose that \eqref{ab} admits two solutions $w<w'$. Then, for any $z\in (w,w')$ one has $F^k(z)<0$ and $F^k(z)>0$, which is impossible. Therefore, \eqref{ab} admits at most one solution. 
% By Lemma \ref{rty}, $\lim_{\la\to 0}\val \, W^k_\la(z)/ \la^n$ exists for all $z\in \R$. 
 Let $(z_1,z_2)\in \R^2$ satisfy $z_1<z_2$.
Rearranging the terms in Lemma \ref{ineq}, dividing by
$\la^n$ and taking $\la$ to $0$ yields
% BvS the colon ":" interrupts the flow of reading. Avoid.
%: 
\begin{equation}\label{dec} F^k(z_1)\geq F^k(z_2)+z_2-z_1\, .\end{equation}
In particular, the following relations hold: 
\begin{equation}\label{dych}\begin{cases}
% \Rightarrow for consistency
F^k(z)\geq 0 \ \Rightarrow \ F^k(z')\geq 0, \ \forall z'\leq z\\ 
F^k(z)\leq 0 \ \Rightarrow \ F^k(z')\leq 0,\ \forall z'\geq z \\
F^k(z)= 0 \ \Rightarrow \ F^k(z')\neq 0,\ \forall z'\neq z\, .\end{cases}\end{equation}
We now show that $F^k$ is not constant, which is still compatible with \eqref{dec} if $F^k\equiv +\infty$ or 
$F^k\equiv -\infty$.
%possible . %  is not consant takes 
%However, unlike Lemma \ref{ineq}, \eqref{dec} does not imply that $F^k$ is strictly decreasing. Indeed, as $F^k$ takes values in $\R\cup\{\pm\infty\}$ one can still have $F^k(z)=+\infty$ for all $z\in \R$ or $F^k(z)=-%\infty$ for all $z\in \R$.  We now show that this cannot be the case.
 Let $C^-:=\min_{k,i,j}g(k,i,j)$ and $C^+:=\max_{k,i,j}g(k,i,j)$. For any $\la\in(0,1]$, one clearly has
$C^-\leq v^k_\la\leq C^+$. 
Consequently, by Lemma \ref{ineq}, % one has: 
$$\val \, W^k_\la(C^+)\leq \val \, W^k_\la(v^k_\la)\leq \val \,  W^k_\la(C^-)\, .$$
Dividing by $\la^n$ and taking $\la$ to $0$ one obtains
\begin{equation}\label{dec2} F^k(C^+)\leq 0\leq F^k(C^-)\, .
\end{equation}
%In particular, $F^k$ is not constant. 
We now define recursively two real sequences $(u^-_m)_{m\geq 1}$ and
$(u^+_m)_{m\geq 1}$ by setting $u^-_1:=C^-$, $u^+_1:=C^+$ and, for all $m\geq 1$,
 \[u^-_{m+1}:=\begin{cases}
 \frac{1}{2}(u^-_m+u^+_m) & \text{if }
 F^k\left(\frac{1}{2}(u^-_m+u^+_m)\right)\geq 0\\ 
u^-_m & \text{otherwise},\end{cases}\]
\[u^+_{m+1}:=
\begin{cases}
\frac{1}{2}(u^-_m+u^+_m) & \text{if }
F^k\left(\frac{1}{2}(u^-_m+u^+_m)\right)\leq 0\\ 
u^+_m & \text{otherwise}\, . \end{cases}\]
By construction,
$F^k(u^-_m)\geq 0$ and $F^k(u^+_m)\leq 0$ for all $m\geq 1$. Moreover, \eqref{dych} and \eqref{dec2} imply 
$C^-\leq u^-_m\leq u^+_m\leq C^+$ for all $m\geq 1$, so that $(u^-_m)_m$ is non-decreasing and $(u^+_m)_m$ is non-increasing.
Furthermore, 
$u^+_{m+1}-u^-_{m+1}\leq \frac{1}{2}(u^+_{m}-u^-_{m})$ for all $m\geq 1$. Hence, the two sequences admit a common limit $\bar{u}$. % denote their common limit. 
For any $\ep>0$, let  $m_\ep$ be such that $u^-_{m_\ep}> \bar{u}-\ep$. By \eqref{dec}, this implies 
$$F^k(\bar{u}-\ep)\geq F^k(u^-_{m_\ep})+ u^-_{m_\ep}-(\bar{u}-\ep)> 0\, .$$ 
Similarly, $F^k(\bar{u}+\ep)<0$ for any $\ep>0$. Together with \eqref{dych}, this shows that $\bar{u}$ is a solution to \eqref{ab}. % , which proves the desired result. % is precisely $\bar{u}$.
\end{proof}

\bigskip

\noindent We are now ready to prove our main result. 
 \paragraph{Proof of Theorem 2.} Let $w$ be the unique solution \eqref{ab} and fix $\ep>0$. By the choice of $w$, $F^k(w-\ep)>0$. Consequently, there exists 
$\la_0>0$ so that
\begin{equation}\label{eqrd} \val~W^k_\la(w-\ep)>0 
\qquad \forall \la\in(0,\la_0)\, .\end{equation}
By Lemma \ref{ineq}, the map $z\mapsto \val\, W^k_\la(z)$ is strictly decreasing.
% Moreover, by Lemma \ref{val=0}, one has $\val\, W^k_\la(v_\la^k)=0$.
By Lemma \ref{val=0}, $\val~W^k_\la(v_\la^k)=0$. %By Lemma \ref{val=0}.
Therefore, \eqref{eqrd} implies 
\begin{equation}\label{eqrd2}v_\la^k> w-\ep
\qquad \forall \la\in(0,\la_0)\, .\end{equation}
% Hence $\liminf_{\la\to 0} v_\la^k\geq w^k$, since $\ep$ is arbitrary.
Because $\ep$ is arbitrary, $\liminf_{\la\to 0} v_\la^k\geq w$. 
By reversing the roles of the players,
one obtains in a similar manner $\limsup_{\la\to 0}v_\la^k\leq w$\,.
Hence, the $\la$-discounted values converge as $\la$ goes to $0$, and 
$\lim_{\la\to 0} v_\la^k=w$.
The result follows then from item $(vi)$ of Section~\ref{NF2}, namely the existence of the value $v^k$ and the equality $\lim_{\la\to 0} v_\la^k=v^k$, due to Mertens and Neyman \cite{MN81}.
\hfill $\blacksquare$

\section{Algorithms}\label{algos}
The formulas obtained in
{Theorems 1} and {2}
suggest binary search methods for approximating the $\la$-discounted values and the value of a stochastic game $(K,I,J,g,q,k)$, 
based on the evaluation of the sign of the real functions $z\mapsto \val\, W^k_\la(z)$ and $z\mapsto F^k(z)$, respectively. 
In this section we
provide a brief description of these algorithms, and discuss their complexity using the logarithmic cost model (a model which accounts for the total number of bits which are involved). 
We refer the reader to \cite{oliubarton2018new2} for more technical details, and for two additional algorithms which provide \emph{exact expressions} for $v_\la^k$ and $v^k$ within the same complexity class.

%Let us introduce some notation first.

\paragraph{Notation.} For any $m\in \N$, let
$E_m:=\{0,\frac{1}{m},\frac{2}{m},\dots,\frac{m}{m}\}$ and $Z_m:= \{0,\frac{1}{2^m},\frac{2}{2^m},\dots,\frac{2^m}{2^m}\}$.

\subsection{Computing the discounted values}
The following bisection algorithm, which is directly derived from {Theorem 1}, inputs a discounted stochastic game with rational data and outputs an arbitrarily close approximation of its value.\\[-0.15cm] 

\noindent \textbf{Input.} A discounted stochastic game $(K,I,J,g,q,k,\la)$ so
that, for some $(N,L)\in \N^2$, the functions $g$ and $q$ take
values in $E_N$ and $\la\in E_L$, and a precision level $r\in\N$.\\[0.05cm] \noindent
\textbf{Output.} A $2^{-r}$-approximation of
$v^k_\la$.\\[0.1cm] \noindent  \textbf{Complexity.} 
Polynomial in $n$ $|I|^n$, $|J|^n$, $\log N$, $\log L$ and $r$.\\[-0.15cm]

\noindent $1$ \, Set $\underline{w}:=0$, $\overline{w}:=1$\\[0.05cm] 
\noindent $2$ \,  WHILE $\overline{w}-\underline{w}> 2^{-r}$ DO\\
\indent $2.1$ \, $z:= \frac{\underline{w}+\overline{w}}{2}$\\[0.05cm] 
\indent $2.2$ \, $v:=\mathrm{sign}$ of $\val\, W^k_{\la}(z)$\\[0.05cm] 
\indent $2.3$ \, IF $v\geq 0$ THEN $\underline{w}:=z$\\[0.05cm] 
\indent $2.4$ \, IF $v \leq 0$ THEN $\overline{w}:=z$\\[0.05cm] 
%\end{enumerate}
\noindent $3$ \, RETURN $u:=\underline{w}$\,. 
\medskip

\noindent By construction, the output $u$ 
satisfies $|u-v_\la^k|\leq 2^{-r}$, and the number of iterations of the ``while'' loop is bounded by $r$. Also, the complexity of each iteration depends crucially on the complexity of Step 2.2. 
%To understand how costly it is, recall that 
First of all, one needs to determine the matrix $W^k_\la(z)$ for some $z\in Z_r$, and this requires the computation of two $n\times n$ determinants for each of its $|I|^n\times |J|^n$ entries. Algorithms for computing the determinant of a matrix exist which are polynomial in its size and in the number of bits that which are needed to encode this matrix. Second, the choice of $z$ and Hadamard's inequality imply that the number of bits which are needed to
encode $W^k_\la(z)$ is %can be bounded by an explicit
polynomial in 
$n$, $|I|^n$, $|J|^n$, $\log N$, $\log L$ and~$r$. 
Third, computing the value of a matrix can be done with linear
programming techniques, and algorithms exist (for example, Karmarkar \cite{karmarkar84}) which are
polynomial in its size and in the number of bits
which are needed to encode this matrix. 
Consequently, 
the computation cost of Step~2.2 is 
polynomial in $n$, $|I|^n$, $|J|^n$, $\log N$, $\log L$ and~$r$, and
the same is true for the entire algorithm.

\subsection{Computing the value}
The following bisection algorithm, which is directly derived from {Theorem 2},
 inputs a stochastic game with rational data and outputs an arbitrarily close approximation of its value.\\[-0.15cm] 
 
\noindent \textbf{Input.} A stochastic game $(K,I,J,g,q,k)$ so
that, for some $N\in \N$, the functions $g$ and $q$ take
values in $E_N$, and a precision level $r\in\N$. \\[0.05cm] \noindent
\textbf{Output.} A $2^{-r}$-approximation of
$v^k$.\\[0.1cm] \noindent  \textbf{Complexity.} 
Polynomial in $n$ $|I|^n$, $|J|^n$, $\log N$ and $r$.\\[-0.15cm]

\noindent $1$ \, Set $\underline{w}:=0$, $\overline{w}:=1$\\[0.05cm] 
\noindent $2$ \,  WHILE $\overline{w}-\underline{w}> 2^{-r}$ DO\\
\indent $2.1$ \, $z:= \frac{\underline{w}+\overline{w}}{2}$\\[0.05cm] 
\indent $2.2$ \, $v:=\mathrm{sign}$ of $F^k(z)$\\[0.05cm] 
\indent $2.3$ \, IF $v\geq 0$ THEN $\underline{w}:=z$\\[0.05cm] 
\indent $2.4$ \, IF $v \leq 0$ THEN $\overline{w}:=z$\\[0.05cm] 
\noindent $3$ \, RETURN $u:=\underline{w}$\,. 

\medskip

\noindent Like before, the output $u$ 
satisfies $|u-v^k|\leq 2^{-r}$, the number of iterations of the ``while'' loop is bounded by $r$, and the variable $z$ always
takes values in the set $Z_r$. 
Unlike before, however, each iteration requires computing 
the sign of $F^k(z)$ at Step 2.2, a computation that might seem problematic due to the limiting nature of the function $F^k$.
% Luckily, this difficulty can be overcome thanks to \cite[Proposition 4.1]{OB18c}:\\[0.2cm]
However, this difficulty can be overcome thanks to the following result.
\begin{proposition}[Proposition 3.6 of \cite{oliubarton2018new2}]\label{propaux} For any $r\in \N$, introduce $\la_r:=4nd(\bit(n)+\bit(d)+\bit(N))-rnd$, where for each $p\in \N$, $\bit(p):=\lceil \log_2(p+1)\rceil$ is the number of bits of $p$. Then, the sign of $F^k(z)$ is equal to the sign of $\val\, W^k_{\la_r}(z)$ for all $z\in Z_r$.\end{proposition}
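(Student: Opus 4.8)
The plan is to upgrade Lemma~\ref{rty} into a \emph{quantitative} statement. Fix $z\in Z_r$. By Lemma~\ref{rty} there is a rational fraction $R$ and a threshold $\la_0>0$ with $\val\,W^k_\la(z)=R(\la)$ for all $\la\in(0,\la_0)$. A rational fraction that is not identically zero has constant sign on a punctured right-neighbourhood of $0$, and since $F^k(z)=\lim_{\la\to 0}R(\la)/\la^n$, the renormalisation by the positive quantity $\la^n$ cannot alter this sign; hence, whenever $F^k(z)\neq 0$, the eventual sign of $\val\,W^k_\la(z)$ as $\la\to 0$ is exactly the sign of $F^k(z)$, whether the limit is finite or $\pm\infty$. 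The entire content of the proposition is therefore to make $\la_0$ \emph{explicit and uniform} in $z$: I would show that one may take $\la_0=\la_r$, so that $\la_r$ already lies in the sign-stable regime and $\val\,W^k_{\la_r}(z)$ shares the sign of $F^k(z)$.

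To produce such an explicit $\la_r$, I would bound from below the smallest positive $\la$ at which $\la\mapsto\val\,W^k_\la(z)$ can change sign. Since this map is continuous (item $(iii)$ of Section~\ref{NF}), its sign changes only where it vanishes; and by Shapley and Snow (item $(iv)$) a zero of the value forces $\det\widehat{M}=0$ for the active square submatrix $\widehat{M}$ of $W^k_\la(z)$. Hence every positive sign-change point is a root of the single polynomial $\prod_{\widehat M}\det\widehat{M}$, the product taken over the finitely many square submatrices. Now the entries $W^k_\la(z)[\ii,\jj]=d^k_\la(\ii,\jj)-z\,d^0_\la(\ii,\jj)$ are determinants of $n\times n$ matrices whose entries are affine in $\la$, so they are polynomials in $\la$ of degree at most $n$; after clearing the denominators coming from $g,q\in E_N$ and from $z\in Z_r$, they have integer coefficients whose bit-length I can control by the Leibniz expansion and Hadamard's inequality. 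This turns $\prod_{\widehat M}\det\widehat{M}$ into an integer polynomial of degree at most $2nd$ (with $d$ the size of $\widehat{M}$) whose coefficient bit-length is explicit in $\bit(n),\bit(d),\bit(N)$ and $r$. A standard Cauchy-type lower bound on the modulus of the nonzero roots of an integer polynomial then yields an explicit $\la_r>0$ below which no sign change occurs, the bit-length of that bound collecting precisely into the exponent defining $\la_r$: a fixed part from $n,d,N$ and a part linear in $r$ (from the dyadic precision $2^{r}$ of $z$) that drives $\la_r\to 0$ as $r\to\infty$. Because the degree and coefficient bounds depend on $z$ only through its common denominator $2^r$, the same $\la_r$ works simultaneously for every $z\in Z_r$ and every admissible $\widehat{M}$, which is the required uniformity.

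The main obstacle will be the fully explicit, uniform bookkeeping of these bit-lengths: tracking how the denominators $N$ and $2^r$ propagate through the degree-$n$ determinants $d^k_\la,d^0_\la$ and then through the $d$-dimensional determinant $\det\widehat{M}$, and checking that one $\la_r$ dominates all submatrices at once, so that the arithmetic lands exactly on the announced exponent. A second, more delicate point is the degenerate case $F^k(z)=0$; by \eqref{dych} this can happen for at most one $z$, namely $z=v^k$, and only if $v^k$ is itself dyadic and lies in $Z_r$. There $R$ may vanish to order strictly larger than $n$ at $\la=0$ while remaining nonzero at $\la_r$, so the literal sign identity need not hold, and one must argue instead that the one-sided sign reported by $\val\,W^k_{\la_r}(z)$ is the constant near-zero sign of $R$ and is consumed correctly by the two-sided tests $v\geq 0$ and $v\leq 0$ of the bisection in Section~\ref{algos}. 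I would dispatch this case with the same root-separation estimate, which guarantees that $\val\,W^k_{\la_r}(z)$ keeps that constant sign throughout $(0,\la_r]$.
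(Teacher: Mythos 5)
\noindent A preliminary remark on scope: the paper never proves Proposition~\ref{propaux}; it imports it verbatim from the companion paper \cite{oliubarton2018new2} and only uses it to justify Step~2.2 of the bisection. So there is no internal proof to compare yours against, and your proposal must be judged on its own merits. Your overall strategy --- make Lemma~\ref{rty} quantitative via Shapley--Snow kernels, clear denominators to get integer polynomials in $\la$, and apply root-separation bounds to obtain an explicit, $z$-uniform threshold --- is the right family of argument and is in the spirit of the cited source. Two steps, however, are genuinely broken.

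First, your ``single polynomial'' $\prod_{\hat M}\det \hat M$, the product over \emph{all} square submatrices of $W^k_\la(z)$, is in general the zero polynomial, so the Cauchy-type bound on its roots is vacuous. The matrix $W^k_\la(z)$ typically has identical rows and columns: two pure stationary strategies $\ii,\ii'$ differing only at a state whose actions affect neither $g$ nor $q$ (any absorbing state, for instance) satisfy $d^k_\la(\ii,\jj)=d^k_\la(\ii',\jj)$ and $d^0_\la(\ii,\jj)=d^0_\la(\ii',\jj)$ for every $\jj$; the paper itself points this out for absorbing games in Section~\ref{comments}, where it eliminates ``redundant rows and columns''. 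Any $2\times 2$ submatrix built from two such rows has determinant identically zero, hence so does your product. Restricting the product to factors that are not identically zero does not immediately repair the logic either, because the active Shapley--Snow kernel at a putative sign-change point $\la^*$ could be precisely one of the discarded factors, and then ``$\la^*$ is a root of $\det\hat M$'' carries no information. The repair requires the branch structure underlying Lemma~\ref{rty}: at parameters where the value is nonzero the active kernel has a determinant that is not identically zero, and a continuity/accumulation argument then shows that every sign-change point is a root of some non-identically-vanishing polynomial among the $\det\hat M$ \emph{and} the cofactor sums $\varphi(\hat M)$ (which control poles); the root bound must be applied to that restricted family. Your text does not make this argument, and it is not cosmetic bookkeeping.

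Second, and more seriously, you do not prove the proposition in the case $F^k(z)=0$; you concede that ``the literal sign identity need not hold'' there and retreat to a claim about the bisection algorithm remaining correct. That is an argument about the algorithm of Section~\ref{algos}, not a proof of the stated sign identity. Concretely, $F^k(z)=0$ forces $z=v^k$, and the problematic scenario is that the branch $R$ with $\val\,W^k_\la(z)=R(\la)$ near $0$ is not identically zero but vanishes at $\la=0$ to order strictly larger than $n$: then $\mathrm{sign}\,F^k(z)=0$ while $\mathrm{sign}\,\val\,W^k_{\la_r}(z)=\pm 1$, and the proposition fails. Combining Lemma~\ref{ineq} with the matching upper bound obtained the same way (each $d^0_\la(\ii,\jj)\leq 2^n$, and $\val$ is nonexpansive), one gets $\la^n\,|v^k_\la-z|\leq |\val\,W^k_\la(z)|\leq 2^n\,|v^k_\la-z|$, so proving the proposition in this case amounts to showing that $F^k(z)=0$ forces $v^k_\la=z$ for all small $\la$ (hence $\val\,W^k_{\la_r}(z)=0$ exactly). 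Your root-separation estimate only yields sign-\emph{constancy} of $\la\mapsto \val\,W^k_\la(z)$ on an explicit interval, which is compatible with both outcomes and therefore cannot decide this case; an additional structural argument (or a weakening of the statement to $z\neq v^k$) is needed. A last, minor point: as quoted, $\la_r=4nd(\bit(n)+\bit(d)+\bit(N))-rnd$ is an integer, not a discount rate (an exponent of $2^{-(\cdot)}$ is evidently missing), and $d$ is never defined in this paper; your proof uses $d$ both as the varying size of $\hat M$ and as the fixed parameter entering $\la_r$, a conflation you would need to resolve to land ``exactly on the announced exponent''.
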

Indeed, Proposition \ref{propaux} implies that the computation in Step~2.2 can be replaced with the computation of $\val\, W^k_{\la_r}(z)$.
By the choice of $\la_r$ and $z$, the number of bits which are needed to encode $W^k_{\la_r}(z)$ is polynomial in $n$, $|I|^n$, $|J|^n$, $\log N$ and $r$, so that the computation in Step~2.2 is polynomial in these variables, and the same is true for the entire algorithm.
%Noting that $\log_2 \la_r$ is polynomial in $n, |I|^n$, $\log_2 N$ and $r$, one deduces that the computation of Step~2.2 is polynomial in $n$, $|I|^n$, $|J|^n$, $\log_2 N$,
%and~$r$, and the same is true for the entire algorithm.

%\subsection{Exact expressions}

\section{Remarks and extensions} \label{comments}
%This section is devoted to provide some additional remarks and extensions of our main results. 
% BvS "firstly" is rarely used, and "first" and "second" are
% also adverbs
First, we provide an alternative definition of the parameterized games $W^k_\la(z)$.
Second, we extend
% \textbf{Theorem 1}
{Theorem 1}
to the more general framework of stochastic games with
compact metric action sets and continuous pay-off and
transition function, and explain why the extension of
% \textbf{Theorem 2} fails. Lastly, we show that the formula
{Theorem 2} fails. Finally, we show that the formula
obtained by Kohlberg \cite{kohlberg74} for the value of
absorbing games 
is captured by {Theorem 2}.

\subsection{An alternative formulation of the parameterized games}
The parameterized game $W^k_\la(z)$ plays a crucial role
% both in \textbf{Theorems 1} and \textbf{2} .
both in {Theorems 1} and~2.
We provide an alternative construction of this game which is based on the Kronecker product of matrices. 
Let $U$ denote a matrix of ones of size $|I|\times |J|$. 
For each $1\leq \ell,\ell'\leq n$, 
consider the matrices 
$Q^{\ell,\ell'}=(q(\ell'\,|\, \ell,i,j))_{i,j}$ and $G^{\ell}=(g(\ell,i,j))_{i,j}$,  
% BvS I don't think we need these braces
and use them to form the following $n\times (n+1)$ array of matrices of size $|I|\times |J|$: %, which encodes the data of game:
$$D_\la=\begin{pmatrix} -\la G^1 & U-(1-\la)Q^{1,1} & \dots & -(1-\la) Q^{1,n} \\
\vdots & \vdots & \ddots & \vdots \\
-\la G^n & -(1-\la) Q^{n,1} & \dots & U-(1-\la) Q^{n,n}\end{pmatrix}. $$
For any $0\leq \ell\leq n$, let $D_\la^\ell$ be the $n\times n$ array of matrices obtained by removing the $(\ell+1)$-th column of matrices from $D$. Denote by $\det\nolimits_\otimes$ the determinant of a square array of matrices, 
developed along columns and where the products are replaced with the Kronecker product of matrices. %, developed along columns.
By construction, $\det\nolimits_\otimes D_\la^0=(d^0_\la(\ii,\jj))_{\ii,\jj}$ and $(-1)^k \det\nolimits_\otimes D_\la^k=(d^k_\la(\ii,\jj))_{\ii,\jj}$,
so that 
%Then, % (compare with Definition \ref{defW}), 
\[W^k_\la(z)=(-1)^k \det\nolimits_\otimes D_\la^k - z \det\nolimits_\otimes D_\la^0\,.\]%, \end{equation*}
The linearity relations established in Lemma \ref{l2} can also be deduced from the properties of the Kronecker product.
This alternative expression for $W_\la^k(z)$ is reminiscent of (or, rather, inspired by) the theory of multi-parameter eigenvalue problems initiated by Atkinson in the 1960s, see Chapter 6 of \cite{atkinson72}. 
The interesting connection which exists between stochastic games and multi-parameter eigenvalue problems is developed by L.A. and M.O-B. in a forthcoming paper~\cite{AOB18b2}.

%\subsection{Directional derivatives}\label{dd}

\subsection{Compact-continuous stochastic games}\label{cc}
Throughout this section we consider stochastic games $(K,I,J,g,q)$, where $K=\{1,\dots,n\}$ is a finite set of states, $I$ and $J$ are two compact metric sets, and $g$ and $q$ are continuous functions. % $q$. 
These games are referred to as compact-continuous stochastic games, for short. 
We denote by $\De(I)$ and $\De(J)$, respectively,
the sets of probability distributions over $I$ and $J$. These sets are compact when endowed with the
weak* topology. For any $(\al,\be)\in  \De(I)\times \De(J)$, we denote its direct product by $\al\otimes \be\in \De(I\times J)$. For all $1\leq \ell,\ell'\leq n$ and $u\in \R^n$, we set 
\begin{eqnarray*}g(\ell,\al,\be)&:=& \int_{I\times J} g(\ell,i,j)\,d(\al \otimes \be)(i,j)\\ q(\ell'|\ell,\al,\be)&:=& \int_{I\times J} q(\ell'|\ell,i,j)\,d(\al \otimes \be)(i,j),\\ 
\rho^\ell_{\la,u}(\al,\be)&:=&
\la g(\ell,\al,\be)+ (1-\la)\sum_{\ell'=1}^n 
q(\ell'|\ell,\al,\be).
\end{eqnarray*}
%For any $u\in \R^n$, we also set $\rho^\ell_{\la,u}(\al,\be)=
%\la g(\ell,\al,\be)+ (1-\la)\sum_{\ell'=1}^n 
%q(\ell'\,|\, \ell,\al,\be)$ and 
%where $\rho^\ell_{\la,u}(\al,\be)=
%\la g(\ell,\al,\be)+ (1-\la)\sum_{\ell'=1}^n 
%where the value exisq(\ell'\,|\, \ell,\al,\be)$.
% g(\ell,\al, \be)+(1-\la)\sum_{\ell'=1}^n q'^{\ell',\ell}(\al,\be)u^{\ell'}.$$
By the minmax theorem stated in item $(i)$ of
Section~\ref{NF}, the zero-sum game $(\De(I),\De(J),\rho^\ell_{\la,u})$ has a value, so one can define the Shapley operator  $\Phi(\la,\,\cdot\,):\R^n\to \R^n$ like in the finite case. 
% the Shapley operator is well-defined, and that both players have optimal strategies in the the games $\rho^\ell_{\la,u}$. 
%according to The minmax theorem stated in item $(i)$ of
%Section \ref{NF}, because the sets $\De(I)$ and $\De(J)$ are
%compact and $(\al,\be)\mapsto \rho_{\la,u}(\al,\be)$ is a
%bi-linear map.
%, and that both players have optimal strategies, thanks to the minmax theorem states in item $(i)$ of Section \ref{NF}. 
%Like in the finite case, 
%Similarly, one proves that 
Furthermore, the compact-continuous stochastic game $(K,I,J,g,q,k,\la)$ has a value $v_\la^k$, which is the unique fixed point of $\Phi(\la,\,\cdot\,)$, and both players have optimal stationary strategies. % for both players. %, and the characterization of the vector of values $v_\la\in \R^n$ as the unique fixed point of $\Phi(\la,\,\cdot\,)$. 
These results are well-known.%, and we can now state our contribution.  

% BvS no sub-sub-sections
\noindent
\paragraph{Extension of Theorem 1.}
{Theorem 1} can be extended to compact-continuous stochastic
games. 

\medskip

% BvS "couple of" is an informal term for "a few", you mean
% an exact pair 
The proof goes along the same lines. 
Like in the finite case, any pair of
stationary strategies $(x,y)\in \De(I)^n\times \De(J)^n$
induces a Markov chain with state-dependent rewards. Let $Q(x,y)\in \R^{n\times n}$ and $g(x,y)\in \R^n$
denote the transition matrix of this chain and the vector of
expected rewards.
Formally, they are defined like in \eqref{Qg1} and
\eqref{Qg2}, but replacing, for $1\leq \ell ,\ell'\leq n$,
 the sum $\sum_{(i,j)\in I\times J} x^\ell(i)y^\ell(j)$ with the corresponding integral $\int_{I\times J} d(x^\ell\otimes y^\ell)(i,j)$. %where $\otimes$ denotes the direct product of probability distributions. 
Similarly, let $\ga_\la(x,y)\in \R^n$ be the vector of
expected normalized $\la$-discounted sum of rewards, which is
% well-defined because $k$, $(x,y)$ and $q$ induce a unique
% BvS overloaded commas make this hard to read
% also not a bad idea to recall the meaning of the letters
well-defined because the state $k$, the pair $(x,y)$, and
the transition function $q$ induce a unique
probability measure over $(K\times I\times J)^{\N}$ on the
% $\si$-algebra generated by the cylinders. %, denoted by
% BvS avoid \si when they also denote strategies, here for
% consistency
sigma-algebra generated by the cylinders, by the Kolmogorov extension theorem.
Like in the finite case, 
 \begin{equation*}\ga_\la^k(x,y)=\frac{d^k_\la(x,y)} {d^0_\la(x,y)},\end{equation*} 
where $d^0_\la(x,y):=\det(\Id - (1-\la)Q(x,y))\neq 0$ and
where $d^k_\la(x,y)$ is the determinant of the $n\times
n$-matrix obtained by replacing the $k$-th column of $\Id -
(1-\la)Q(x,y)$ with $\la g(x,y)$. Lemma % Lemma \ref{l1} and
 \ref{l2} can be extended word for word, by replacing sums
with the corresponding integrals, and setting
$\widehat{x}:=x^1\otimes \dots\otimes x^n\in \De(I^n)$. 

For each $z\in \R$, the auxiliary game $W^k_\la(z)$ can be defined in a similar manner by setting
$$W^k_\la(z)[\ii,\jj]:=d_\la^k(\ii,\jj)-zd_\la^0(\ii,\jj)\qquad
\forall (\ii,\jj)\in I^n\times J^n.$$ 
Note that $W^k_\la(z)$ is no longer a matrix, but a mapping from the compact metric set $I^n\times J^n$ to $\R$. Like in the finite case, consider the \emph{mixed extension} of this game, that is: 
the zero-sum game with action sets $\De(I^n)$ and $\De(J^n)$ and pay-off function
\[W^k_\la(z)[\xx,\yy]:=\int_{I^n\times J^n}W_\la^k(z)[\ii,\jj]\,
d(\xx\otimes \yy)(\ii,\jj)\,.\] 
By the minmax theorem stated in item $(i)$ of Section~\ref{NF}, this game admits a value, denoted by $\val\, W^k_\la(z)$. 
Lemmas \ref{ineq} and \ref{val=0} can thus be extended
word for word as well; it is enough to replace all sums with the
corresponding integrals.
The extension of {Theorem~1} follows directly from these two lemmas. 

\noindent
\paragraph{Extension of Theorem 2.} 
{Theorem 2} cannot be extended to compact-continuous stochastic
games. 

\medskip

Indeed, Vigeral \cite{vigeral13} provided an example of a
stochastic games with compact action sets and continuous
pay-off and transition functions for which the discounted
values do not converge. In this sense, the extension of our
result to this framework is not possible.
However, we point out that only one point in our proof
is problematic. Indeed, the failure occurs in the use of Lemma \ref{rational}, which relies on
the formula stated as Property $(iv)$ in Section~\ref{NF},
which only holds in the finite case. For infinite action sets it is no longer true that
$\la\mapsto \val~W^k_\la(z)$
is a rational fraction in $\la$ in a neighborhood of $0$ for
all $z\in \R$, which was crucial to prove the existence of the limit
% $F^k(z):=\lim_{\la\to 0} \mathrm{val}\, W^k_\la(z)/\la^n$.
$F^k(z):=\lim_{\la\to 0}\val~W^k_\la(z)/\la^n$.
% avoid hardformatting

Determining necessary and sufficient conditions on $I$, $J$,
$g$, and $q$ which ensure the convergence of the discounted
values or the existence of the value is an {open problem}. 
%Both Mertens and Neyman \cite{MN81} and 
Bolte, Gaubert and Vigeral \cite{BGV14} provided sufficient
conditions, namely that $g$ and $q$ are {separable} and
{definable}. Without going into a precise definition of
these two conditions, they hold in particular when the
pay-off function $g$ and the transition $q$ are polynomials
in the players' actions. However, the case where $I$, $J$,
$g$, and $q$ are {semi-algebraic} is still unsolved.
(A subset $E$ of $\R^d$ is semi-algebraic if it is
defined by finitely many polynomial inequalities; a function
is semi-algebraic if its graph is semi-algebraic.)

\subsection{Absorbing games}
We now show that Kohlberg's 
result \cite{kohlberg74} on absorbing games is captured in %can be seen as a particular case of
% \textbf{Theorem 2}.
{Theorem~2}. An \emph{absorbing game} is a 
stochastic game $(K,I,J,g,q,k)$ so that, for some fixed state $k_0\in K$, 
\[
q(k\,|\, k, i,j)=1\qquad \forall (i,j)\in I\times J\,,
\quad \forall \,k\neq k_0\,.
\]
For any initial state $k\neq k_0$, the state does not evolve
during the game and, as a consequence, $v^k_\la$ is equal to the value of the matrix $(g(k,i,j))_{(i,j)\in I\times J}$ for all $\la\in(0,1]$ and $k\neq k_0$. We will use the notation $v^k$ to emphasize that $v_\la^k$ does not depend on $\la$, for all $k\neq k_0$. 

\paragraph{Notation.} We assume without loss of generality that $k_0=1$, and set $u(z):=(z,v^2,\dots,v^n)$ for all $z\in \R$.  
\paragraph{Kolhberg's result.} 
Every absorbing game $(K,I,J,g,q,1)$ has a value, denoted by $v^1$, which is 
the unique point where the function  $T:\R\to  \R\cup\{\pm\infty\}$ changes sign, where $T$ is defined using the Shapley operator by $$T(z):=\lim_{\la\to 0} \frac{\Phi^1(\la,u(z))-z}{\la}\qquad \forall z\in \R\,.$$
%Furthermore, it follows from Mills \cite{mills56} that, for every $z\in \Z$, $T(z)$ is the value of a linear program. %ming techniques. %thanks to 

\paragraph{Comparison to our result.}
We claim that $F^1=T$ in the class of absorbing games. 
To see this, first of all note that for all $(\ii,\jj)\in I^n\times J^n$,
 \begin{eqnarray*}
d^0_\la(\ii,\jj)&=&\la^{n-1}\left(1-(1-\la)q(1\,|\, 1,\ii^1,\jj^1)\right)\\
d^1_\la(\ii, \jj) %\det D^k_{\ii \jj}(\la)
&=&\la^{n-1}\left(\la g(1,\ii^1,\jj^1)+(1-\la)\sum_{\ell=2}^n q(\ell\,|\, 1,\ii^1,\jj^1)v^\ell \right).\end{eqnarray*}
% Therefore, for any $z\in \R$ one has: 
Thus, for any $z\in \R$, % and $(\ii,\jj)\in I^n\times J^n$, % -th entry of $ is equal to 
\[W^1_\la(z)=\left(\la^{n-1}\left(\la g(1,\ii^1,\jj^1) %+(1-\la)q(1|1,\ii^1,\jj^1)z + 
 +(1-\la)\sum\nolimits_{\ell=1}^n q(\ell\,|1,\ii^1,\jj^1)u^\ell(z) - z\right)\right)_{\ii,\jj}\,.\]
%Not surprisingly, the two expressions depend on $(\ii,\jj)$ only through $(\ii^1,\jj^1)\in I\times J$. 
In particular, $W^1_\la(z)$ depends on $(\ii,\jj)\in I^n\times J^n$ only through $(\ii^1,\jj^1)\in I\times J$.
Eliminating the redundant rows and columns of $W^1_\la(z)$ one thus obtains the matrix $\la^{n-1}(\mathcal{G}^1_{\la,u}-z U)$, where $\mathcal{G}^1_{\la,u}$ is the  $|I|\times |J|$-matrix described in item $(v)$ of Section~\ref{NF2}, and 
$U$ is a matrix of ones of the same size. 
The affine invariance of the value operator, namely $\val(c M+dU)=c~\val\, M+d$ for any matrix $M$ and any $(c,d)\in (0,+\infty)\times \R$, gives then
\[ \frac{\val\, W^1_\la(z)}{\la^n}=\frac{\la^{n-1}\val\,
(\mathcal{G}^1_{\la,u(z)}-zU)}{\la^n}=\frac{\Phi^1(\la,u(z))-z}{\la}\,.\]  
Taking $\la$ to $0$ gives the desired equality, $F^1=T$. %, which holds only for absorbing games, 

%\matmethods{There is no data associated with this paper. }

%\showmatmethods{} % Display the Materials and Methods section

\section*{Acknowledgements} We are greatly indebted to Sylvain Sorin, whose comments on an earlier draft led to significant simplifications of our main proofs. We are also very thankful to %Sergiu Hart for his insightful suggestions, namely concerning the 
Abraham Neyman for his careful reading and numerous remarks
on a previous version of this paper, and to Bernhard von
Stengel, the Editor, and the anonymous reviewers for their insightful comments and suggestions at a later stage. %I am also thankful to Nicholas M
%I am also particularly thankful to Bernhard von Stengel, for his comments on the 
%, who have very much contributed to the final version of the present manuscript
%Discussions with Sergiu Hart, Johaness H{\"o}%rner, J\'er\^ome Renault, Guillaume Vigeral and Bruno Ziliotto have also been extremely helpful and contributed to this work. 
The second author gratefully acknowledges the support of the French National Research Agency
for the Project CIGNE (Communication and Information in Games on Networks) ANR-15-CE38-0007-01, and the support of the Cowles Foundation at Yale University.

%\showacknow{}

\bibliographystyle{plain}
%\bibliography{pnas-sample}
\bibliography{bibliothese2}

%\end{document}

\end{document}